\title{The ribbon category framework for topological quantum computing}
\author[W.~Aboumrad]{Willie Aboumrad}
\address[W.~Aboumrad]{The Institute for Computational and Mathematical Engineering (ICME) at Stanford University}
\email{willieab@stanford.edu}
\urladdr{https://web.stanford.edu/~willieab}
\keywords{Ribbon category, fusion, quantum groups, topological quantum computing}
\date{}
\begin{document}

\maketitle

\begin{abstract}
	This expository article supplies the mathematical background underpinning the braid representation calculator introduced in \cite{willie_aqc}; those representations describe the sets of logic gates available to a topological quantum computer for processing encoded qubits. Assuming little background in category theory, we first recall the notion of a ribbon fusion category (RFC), collecting most of the necessary definitions. Then we discuss how certain RFCs arise from the representation theory of quantum groups. We explore the braiding in these categories in detail, since it is essential for the quantum computing application. 
\end{abstract}	

\tableofcontents

\section{Introduction}
The notion of a \textit{ribbon fusion category} (RFC) features prominently in the story of topological quantum computers powered by anyons. This expository \chorpaper\ recalls that notion, explains how RFCs induce representations of the Artin braid group, and describes how RFCs arise from quantum groups.  We will more or less introduce all the necessary terminology. \ddichotomy{Although the material in this \chorpaper\ is well-known to experts, it is important for two reasons: first, it introduces some necessary background; second, it explains how the Casimir element in a classical universal enveloping algebra determines the braiding in the category of modules over the corresponding quantum group-- understanding that braiding is the focus of this work.}{In particular, this \chorpaper\ provides the background in category theory assumed in the author's article \cite{willie_aqc} on computing the braid representations arising from anyon systems. However, the reader need not grasp every detail in order to start doing braid computations using {\sc SageMath}.}

Ribbon categories are certain abelian monoidal, or \textit{tensor}, categories. Tensor categories play an important role in many areas of modern mathematics, such as algebraic geometry, algebraic topology, number theory, mathematical physics, and quantum computation. There are spectacular connections between the theory of tensor categories and representation theory, quantum groups, infinite dimensional Lie algebras, conformal field theory and vertex algebras, operator algebras, and invariants of knots and 3-manifolds which arose from the works of Drinfeld, Moore and Seiberg, Kazhdan and Lusztig, Jones, Witten, Reshetikhin and Turaev, and many others \cite{lusztig_1988,dri_89b,Dri_QHA,moore_seiberg_1989,reshetikhin_turaev_1990,gelfand_kazhdan_1992,kazhdan_lusztig_1993,jones_2006,francesco_mathieu_senechal_2011}. See also \cite{etingof_gelaki_nikshych_ostrik_2017} and references therein. The theory of anyonic computation relies on these connections.

In this context it is helpful to think of category theory as a ``categorification'' of ordinary algebra. In other words, there is a dictionary between the two subjects, such that the usual algebraic structures can be recovered from the categorical structures by passing to the isomorphism classes of objects \cite{etingof_gelaki_nikshych_ostrik_2017}. For instance, the notion of a category is the categorification of the notion of a set\footnote{There is a foundational subtlety that is not relevant in our context: in general objects and morphisms in a category are defined to form a \textit{class} rather than a set. This is to circumvent Russell's paradox regarding the existence of a set of all sets. In this work we deal only with \textit{small} categories, which means their objects and hom-sets are indeed sets.} and the notion of a monoidal category is the categorification of the notion of a monoid. 

This correspondence allows for a theory of algebra representations without vectors, in the same sense that category theory allows for a theory of sets without elements. For instance, the representation theory of a quasitriangular (braided) Hopf algebra may be formulated in terms of a ribbon category.

In the setting of quantum computing, the correspondence allows us to capture the dynamics of anyons, modeled as simple objects in an RFC, without knowing much about what they ``actually look like.'' This is explained in \cite{willie_aqc}.

This \chorpaper\ is structured as follows. \Cref{rfc framework} introduces ribbon categories and necessary terminology. In addition, it discusses the induced braid representations and briefly presents the associated diagrammatical calculus, which is ubiquitous and useful for computations.\paper{ \Cref{not and conv} fixes our notation for quantum groups.} Then \Cref{rfc from q gps} elucidates the connection between semisimple ribbon categories and the representation theory of quantum groups. The aim is to show that the category $\Uqgmodcat$ of finite-dimensional modules over the quantum group $\Uqg$ is a ribbon category whose braiding is generated infinitesimally. This means the braid group representation induced by the categorical braiding of $\Uqgmodcat$ on any tensor product $V^{\otimes m}$ is equivalent to the representation induced by the exponential of the symmetric invariant $2$-tensor on $\Ug$ defined by the enveloping algebra's Casimir element. This representation has a geometric interpretation in terms of the monodromy of the Knizhnik-Zamolodchikov equations, which motivates the connection between certain conformal field theories and the RFC framework \cite{Dri_QHA_KZ_eqns}. Demonstrating this equivalence requires showing that the braiding on $\Uqgmodcat$ is induced by the $R$-matrix of a related topological ribbon algebra $\widetilde{\Uqg}$, which can be understood as a certain completion of $\Uqg$.

\section{The ribbon category framework}
\label{rfc framework}

We assume the notion of a category, subcategory, functor, and their equivalences. Otherwise we start from the beginning. MacLane \cite{maclane_1988} provides a great introduction to the subject of category theory. 

	\subsection{Abelian categories}
	To begin, we need the notion of \textit{additive} categories.

\begin{dfn}\cite[Definition~1.2.1]{etingof_gelaki_nikshych_ostrik_2017}
	An \textit{additive category} is a category $\ccat$ satisfying the following axioms:
	\begin{enumerate}[(i)]
		\item Every hom-set $\Hom(X, Y)$ is equipped with the structure of an abelian group (written additively) making composition of morphisms biadditive.
		\item There exists a zero object $0 \in \ccat$ such that $\Hom(0, 0) = 0$.
		\item (Direct sums exist.) For any $X, Y \in \ccat$ there exists an object $S \in \ccat$ and morphisms $p_X \colon S \to X$, $p_Y \colon S \to Y$, $i_X\colon X \to S$, and $i_Y \colon Y \to S$ such that $p_X i_X = \id_X$, $p_Y i_Y = \id_Y$, and $i_X p_X + i_Y p_Y = \id_S$.
	\end{enumerate}
\end{dfn}

A universal property shows the object $S$ in (iii) is unique up to isomorphism. It is the \textit{direct sum} of $X$ and $Y$, and we denote it by $X \oplus Y$. This means every additive category $\ccat$ is equipped with a bifunctor $\oplus\colon \ccat \times \ccat \to \ccat$.

The categories considered in this work are \textit{$\mathbb{C}$-linear}, or \textit{enriched over} $\mathbb{C}$, which means every hom-set $\Hom(X, Y)$ is equipped with the structure of a complex vector space making composition of morphisms $\mathbb{C}$-linear.

Introducing the notion of a \textit{decomposition} of an object requires the notion of \textit{kernels} and \textit{cokernels}. These are categorifications of the usual notions of kernels and cokernels in linear algebra. Let $\ccat$ be an additive category and let $f \colon X \to Y$ be a morphism in $\ccat$. The \textit{kernel} $\ker(f)$ of $f$ (if it exists) is an object $K$ together with a morphism $k\colon K \to X$ such that $fk = 0$. If $k'\colon K' \to X$ is such that $fk' = 0$ then there exists a unique morphism $\ell\colon K' \to K$ such that $k \ell = k'$. Thus when $\ker(f)$ exists it is unique up to a unique isomorphism.

Dually, the \textit{cokernel} $\coker{f}$ of $f$ (if it exists) is an object $C$ together with a morphism $c\colon Y \to C$ such that $cf = 0$. Similarly, if $c'\colon Y \to C'$ is such that $c'f = 0$ then there exists a unique morphism $\ell\colon C \to C'$ such that $\ell c = 0$. Therefore if $\coker{f}$ exists then it is unique up to a unique isomorphism.

\begin{dfn}\cite[Definition~1.3.1]{etingof_gelaki_nikshych_ostrik_2017}
	An \textit{abelian category} is an additive category $\ccat$ in which for every morphism $f \colon X \to Y$ there exists a sequence
	$$
		K \xrightarrow{\,\smash{{\ensuremath{\scriptstyle k}}}\,}
		X \xrightarrow{\,\smash{{\ensuremath{\scriptstyle i}}}\,}
		I \xrightarrow{\,\smash{{\ensuremath{\scriptstyle j}}}\,}
		Y \xrightarrow{\,\smash{{\ensuremath{\scriptstyle c}}}\,}
		C
	$$
	with the following properties:
	\begin{enumerate}[(i)]
		\item $ji = f$,
		\item $(K, k) = \ker(f)$ and $(C, c) = \coker{f}$,
		\item $(I, i) = \coker{k}$ and $(I, j) = \ker(c)$.
	\end{enumerate}
	The object $I$ is the \textit{image} of $f$ and we denote it by $\mathrm{Im}(f)$.
\end{dfn}

The notion of a kernel allows us to identify subobjects in a given object $X$. Concretely, a morphism $f\colon X \to Y$ in an abelian category is a \textit{monomorphism} if $\ker(f) = 0$. Then a \textit{subobject} of $X$ is an object $Y$ together with a monomorphism $i\colon Y \to X$. In \ddichotomy{\Cref{anyonic qc ch}}{the quantum computing application}, we interpret subobjects of a tensor product as the possible outcomes of anyon fusion.

The following theorem justifies thinking of direct sums, morphisms, kernels and cokernels in an abelian category in terms of ordinary linear algebra.

\begin{thm}\cite{freyd_1966}
	Every abelian $\mathbb{C}$-linear category is equivalent, as an additive category, to a full subcategory of the category of modules over a unital associative $\mathbb{C}$-linear algebra.
\end{thm}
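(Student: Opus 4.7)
My plan is to realize $\ccat$ inside a larger abelian category via Yoneda, and then to transport this ambient category into a module category using the $\Hom$-functor out of a single generator. Throughout, write $\mathrm{Vect}_{\mathbb{C}}$ for the category of complex vector spaces.

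First, I would introduce $\mathcal{A} := \mathrm{Lex}(\ccat^{\mathrm{op}}, \mathrm{Vect}_{\mathbb{C}})$, the category of $\mathbb{C}$-linear, additive, left exact functors. For every $X \in \ccat$ the representable functor $h_X := \Hom_\ccat(-, X)$ belongs to $\mathcal{A}$, so the assignment $X \mapsto h_X$ defines a functor $h\colon \ccat \to \mathcal{A}$. The Yoneda lemma guarantees that $h$ is $\mathbb{C}$-linear and fully faithful, and one verifies exactness by computing kernels and cokernels in $\mathcal{A}$ pointwise and invoking the abelian structure of $\ccat$.

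Next, because our categories are small (per the footnote in the introduction), $\mathcal{A}$ is a Grothendieck $\mathbb{C}$-linear abelian category: filtered colimits exist and are exact, and $G := \bigoplus_{X \in \mathrm{Ob}(\ccat)} h_X$ is a generator. Define $R := \mathrm{End}_{\mathcal{A}}(G)^{\mathrm{op}}$, a unital associative $\mathbb{C}$-algebra. The Gabriel--Popescu theorem then asserts that $\Hom_{\mathcal{A}}(G, -)\colon \mathcal{A} \to R\text{-Mod}$ is fully faithful with exact left adjoint $- \otimes_R G$, presenting $\mathcal{A}$ as a reflective full subcategory of $R\text{-Mod}$. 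Composing with $h$ identifies $\ccat$ with a full additive subcategory of $R\text{-Mod}$, as required.

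The main obstacle is the appeal to Gabriel--Popescu, whose proof requires a transfinite induction to promote the faithfulness of $\Hom_{\mathcal{A}}(G, -)$ to fullness. A more self-contained alternative, in the spirit of Freyd's original construction, replaces this step by exhibiting an injective cogenerator $I$ of $\mathcal{A}$ and taking $R := \mathrm{End}_{\mathcal{A}}(I)^{\mathrm{op}}$; the obstacle then migrates to producing $I$ and checking that $\mathcal{A}$ has enough injectives. Since the statement only demands equivalence as an additive category, neither route needs to preserve exactness beyond what the Yoneda step already supplies.
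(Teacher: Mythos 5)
The paper offers no proof of this statement at all: it is attributed wholesale to Freyd's 1966 work, so there is nothing internal to compare your argument against. Your outline is essentially the standard modern proof of the Freyd--Mitchell-type embedding: Yoneda into the category $\mathcal{A}$ of left exact functors, observe that $\mathcal{A}$ is Grothendieck with generator $G = \bigoplus_X h_X$ (this uses smallness, which the paper's footnote grants), and then apply Gabriel--Popescu to land fully faithfully in $R$-Mod for $R = \End_{\mathcal{A}}(G)^{\mathrm{op}}$; since the composite is $\mathbb{C}$-linear, additive, and fully faithful, it identifies $\ccat$ with a full additive subcategory of $R$-Mod, which is all the statement asks for. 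Your fallback via an injective cogenerator is closer to Freyd's and Mitchell's original route. One inaccuracy worth flagging: in $\mathcal{A} = \mathrm{Lex}(\ccat^{\mathrm{op}}, \mathrm{Vect}_{\mathbb{C}})$ only kernels are computed pointwise; cokernels require passing through the exact reflector from the full functor category, so ``computing kernels and cokernels pointwise'' does not literally verify exactness of $h$. This does not damage the proof of the stated theorem, since equivalence \emph{as an additive category} onto a full subcategory needs only full faithfulness and additivity, both of which Yoneda and Gabriel--Popescu already supply --- but if you ever want the embedding to be exact (as one typically does), that step needs the correct description of colimits in $\mathcal{A}$.
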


Moreover, a $\mathbb{C}$-linear abelian category $\ccat$ is \textit{finite} if it is equivalent to the category $\Amodcat$ of finite-dimensional modules over a finite-dimensional $\mathbb{C}$-algebra $A$ \cite[Definition~1.8.5]{etingof_gelaki_nikshych_ostrik_2017}. Note that $\ccat$ does not uniquely or canonically determine the algebra $A$, but rather its Morita equivalence class.

Crucially, the notion of a kernel allows us to define a distinguished set of elements in an abelian category. A non-zero object $X$ in an abelian category $\ccat$ is \textit{simple} if its only subobjects are $0$ and $X$. In a $\mathbb{C}$-linear abelian category, $X$ is simple if and only if $\End(X) \cong \mathbb{C}$. If $\ccat$ is \textit{semisimple}, the simple objects can be viewed as a generating set for $\mathrm{Ob}(\ccat)$ under $\oplus$.

\begin{dfn}
	An abelian category $\ccat$ is \textit{semisimple} if each object is semisimple; an object in $\ccat$ is \textit{semisimple} if it is a finite direct sum of simple objects. 
\end{dfn}

Since we only consider \textit{finite} direct sums, there is an alternative characterization of finite $\mathbb{C}$-linear abelian categories: a $\mathbb{C}$-linear abelian category is \textit{finite} if and only if it has finitely many simple objects and its hom-sets are finite-dimensional.

	\subsection{Tensor categories}
	Whereas abelian categories are equipped with an additive structure, \textit{monoidal categories} are equipped with a multiplicative one.

\begin{dfn}\label[defn]{monoidal category}\cite[Section~VII.1]{maclane_1988}
	A \textit{monoidal category} $(\ccat, \otimes, I, a, l, r)$ is a category $\ccat$ with a bifunctor $\otimes: \ccat \times \ccat \to \ccat$, a unit object $I$, and natural isomorphisms $a, l, r$ satisfying the following conditions. First, the following diagram commutes for all triples $(U, V, W)$ of objects in $\ccat$.
\begin{equation}\label{pentagon}
\scalebox{0.5}{
\begin{tikzpicture}[line width=1.1]
	\tikzmath{\apo = 6; \ss=1.7;}
	\node[scale=\ss] at ($(90:\apo)$) {$(U \otimes V) \otimes (W \otimes X)$};
	\node[scale=\ss] at ($(90 + 1*72:\apo)$) {$\big((U \otimes V) \otimes W\big) \otimes X$};
	\node[scale=\ss] at ($(90 + 2*72:\apo)$) {$\big(U \otimes (V \otimes W)\big) \otimes X \quad\quad\quad\quad$};
	\node[scale=\ss] at ($(90 + 3*72:\apo)$) {$\quad\quad\quad\quad U \otimes \big((V \otimes W) \otimes X\big)$};
	\node[scale=\ss] at ($(90 + 4*72:\apo)$) {$U \otimes \big(V \otimes (W \otimes X)\big) $};

	\tikzmath{\start = 0.3; \tar = 1 - \start;}
	\foreach[count=\next] \curr in {0,...,4} {
		\pgfmathsetmacro\ti{90 + int(\curr)*72}
		\pgfmathsetmacro\tf{90 + int(\next)*72}
		\coordinate (vi) at ($(\ti:\apo)!\start!(\tf:\apo)$);
		\coordinate (vf) at ($(\ti:\apo)!\tar!(\tf:\apo)$);
		\pgfmathparse{int(mod(\curr,4))}
		\ifnum\pgfmathresult=0
			\draw[{Latex[length=3mm]}-] (vi) -- (vf);
		\else
			\draw[-{Latex[length=3mm]}] (vi) -- (vf);
		\fi
	}

	\tikzmath{\apo = \apo; \ss = 0.8*\ss; }
	\node[scale=\ss] at ($(90 + 0.5*72 + 6:\apo)$) {$a_{U\otimes V, W, X}$};
	\node[scale=\ss] at ($(90 + 1.5*72:\apo)$) {$a_{U, V, W}\otimes \id_X \quad\quad$};
	\node[scale=\ss] at ($(0, -\apo)$) {$a_{U, V\otimes W, X}$};
	\node[scale=\ss] at ($(90 + 3.5*72:\apo)$) {$\quad\quad \id_U \otimes a_{V, W, X}$};
	\node[scale=\ss] at ($(90 + 4.5*72 - 6:\apo)$) {$a_{U, V, W\otimes X}$};
\end{tikzpicture}}
\end{equation}
The natural isomorphism $a$ is known as the \textit{associativity constraint}. In addition, the associativity constraint is compatible with the \textit{left} and \textit{right unit constraints} $l$ and $r$, which means the following diagram commutes for all $U, V \in \ccat$.
	\begin{equation}\label{diag:triangle}
	\begin{tikzcd}[column sep=0.5cm, row sep=0.5cm]
	(V \otimes I) \otimes W \ar[rr, "a_{V, I, W}"] \ar[dr, "r_V \otimes \id_W"']
	&&
	V \otimes (I \otimes W) \ar[dl, "\id_V \otimes l_{W}"] \\
	&
	V \otimes W
	&
	\end{tikzcd}
	\end{equation}
	A monoidal category is said to be \textit{strict} if the associativity and unit constraints $a, l, r$ are all identities of the category.
\end{dfn}

A \textit{tensor category} is a monoidal abelian category \cite{etingof_gelaki_nikshych_ostrik_2017}. This means every tensor product of objects in a tensor category is a direct sum. In other words, tensor categories are the categorifications of rings.

If $\mathbb{C}$-linear abelian categories are categories of modules over a $\mathbb{C}$-algebra, then $\mathbb{C}$-linear \textit{tensor} categories are categories of modules over a complex \textit{bi}algebra: compatible algebra and coalgebra structures are needed to define module structures on tensor products.

Notice that any bialgebra corresponding to a strict tensor category must be coassociative. Relaxing strictness results in the notion of a \textit{quasi-bialgebra}, which is coassociative up to conjugation by a distinguished element known as an \textit{associator}. \Cref{top_que} discusses quasi-bialgebras in more detail.
	
	\subsection{Braids and ribbons}
	\label{braided and ribbon cat}
	In order to construct representations of the braid group, we need \textit{braided categories}.

\begin{dfn}\label[defn]{braided tensor category}
	A tensor category $(\ccat, \otimes, I, a, l, r)$ is \textit{braided} if there exists a commutativity constraint $c$, called a \textit{braiding}, such that for any pair of objects $V, W$ the map
	\begin{equation*}
	c_{V, W}: V \otimes W \to W \otimes V
	\end{equation*}
	is a natural isomorphism. We require that $c$ satisfies the Hexagon Axioms, encoded in the commutativity following diagrams.
	\begin{equation}\label{hexagon1}
	\begin{tikzcd}[column sep={2cm,between origins}, row sep={2cm,between origins}]
	& U \otimes(V \otimes W) \arrow[rr, "c_{U, V\otimes W}"] && (V \otimes W) \otimes U \arrow[rd, "a_{V, W, U}"] &  \\
	(U \otimes V) \otimes W \arrow[ru, "a_{U, V, W}"]  \arrow[rd, "c_{U,V} \otimes \id_W"'] &  &&  & V \otimes(W \otimes U) \\
	& (V \otimes U) \otimes W \arrow[rr, "a_{V, U, W}"'] && V \otimes (U \otimes W) \arrow[ru, "\id_V \otimes c_{U, W}"'] &
	\end{tikzcd}
	\end{equation}
	\begin{equation}\label{hexagon2}
	\begin{tikzcd}[column sep={2cm,between origins}, row sep={2cm,between origins}]
	& (U \otimes V) \otimes W \arrow[rr, "c_{U \otimes V, W}"] &&  W \otimes (U \otimes V) \arrow[rd, "\inv{a_{W, U, V}}"] &  \\
	U \otimes (V \otimes W) \arrow[ru, "\inv{a_{U, V, W}}"]  \arrow[rd, "\id_U\otimes c_{V,W} "'] &  &&  & (W \otimes U) \otimes V \\
	& U \otimes (W \otimes V) \arrow[rr, "\inv{a_{U, W, V}}"'] && (U \otimes W) \otimes V \arrow[ru, " c_{U, W} \otimes \id_V"'] &
	\end{tikzcd}
	\end{equation}

	The braiding $c$ in a braided category $\ccat$ is \textit{symmetric} if
	\begin{equation}\label{rel:symmetric braiding in category}
		c_{W, V} \circ c_{V, W} = \id_{V \otimes W}
	\end{equation}
	for all objects $V, W$ in $\ccat$. In this case $\ccat$ is a \textit{symmetric category}.
\end{dfn}

Note that in a braided category, the isomorphism $c_{V, V} \colon V \otimes V \to V \otimes V$ is \textit{not} required to be involutive; in general, we obtain non-trivial isomorphisms $V \otimes W \to W \otimes V \to V \otimes W$.

Moreover, observe that braided tensor categories are still categories of modules over a bialgebra, but the bialgebra is not cocommutative in general. However, it is always \textit{quasitriangular (braided)}: cocommutativity is enforced only up to conjugation by a distinguished element called the universal $R$-matrix.

Next we show how to obtain representations of the \textit{Artin braid group} from a braided category. For starters, recall the braid group $B_m$ is generated by $\sigma_1, \ldots, \sigma_{m-1}$ subject to the \textit{braid relations}:
\begin{align}\label{braid rels}
\sigma_{i} \sigma_{i+1} \sigma_{i} = \sigma_{i+1} \sigma_i \sigma_{i+1},
\quad
\sigma_i \sigma_j = \sigma_j \sigma_i,
\quad \text{for} \quad |i-j| > 1.
\end{align}

The braid group $B_m$ models isotopy classes of braids on $m$ strands, so it has a nice diagrammatic interpretation. For instance, \Cref{fig braid generator} illustrates the braid generator $\sigma_2$ in $B_4$. By convention the $(i+1)$st strand in $\sigma_i$ crosses over the $i$th. Vertical concatenation implements multiplication in the group. Like \cite{wang_2010}, we adopt the \textit{optimistic convention}: we parse diagrams from bottom to top, so ``time'' flows upwards.
\begin{figure}[!h]
	\begin{center}
		\begin{tikzpicture}[scale=0.8]
 		\draw[thick] (4,0) to [out=90,in=-90] (2,2);
 		\path[fill=white] (3,1) circle (.2);
 		\draw[thick] (2,0) to [out=90,in=-90] (4,2);
 		\draw[thick] (0,0) to [out=90,in=-90] (0,2);
 		\draw[thick] (6,0) to [out=90,in=-90] (6,2);
 		\draw[fill=gray] (0,0) circle (.15);
 		\draw[fill=gray] (2,0) circle (.15);
 		\draw[fill=gray] (4,0) circle (.15);
 		\draw[fill=gray] (0,2) circle (.15);
 		\draw[fill=gray] (2,2) circle (.15);
 		\draw[fill=gray] (4,2) circle (.15);
 		\draw[fill=gray] (6,0) circle (.15);
 		\draw[fill=gray] (6,2) circle (.15);
 		\node at (3,-1.2) {$\sigma_2$};
 		\node at (0,3.5) {\quad};
		\end{tikzpicture}
	\end{center}
	\caption{A diagrammatic interpretation of the braid generator $\sigma_2 \in B_4$.}
	\label{fig braid generator}
\end{figure}
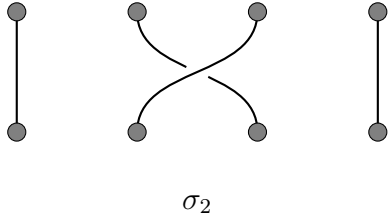

Now let $V$ be any object in a braided category $\ccat$ and set $\check{R} = c_{V, V}$. The map $\check{R}$ is known as a \textit{braiding} operator because it induces braid representations as follows. The Hexagon Axioms imply $\check{R}$ satisfies the celebrated \textit{Yang-Baxter} equation in $\End(V \otimes V \otimes V)$:
\begin{equation}\label{yb}
(\check{R} \otimes \id) (\id \otimes \check{R})	(\check{R} \otimes \id) = (\id \otimes \check{R})(\check{R} \otimes \id) (\id \otimes \check{R}).
\end{equation}
Thus if we let $\check{R}_i$ denote the endomorphism of $V^{\otimes m}$ acting as $\check{R}$ on the $(i, i+1)$st pair of tensor factors, \Cref{yb} implies $\check{R}_i \check{R}_{i+1} \check{R}_i = \check{R}_{i+1} \check{R}_i \check{R}_{i+1} $. Together with the obvious far commutativity $\check{R}_i \check{R}_j = \check{R}_j \check{R}_i$ for $|i - j| > 1$, this means there is a homomorphism $B_m \to \End_{\ccat}(V^{\otimes m})$ satisfying
\begin{equation}\label{braid repn induced by braiding}
	\sigma_i \to \check{R}_i.
\end{equation}

So far our dictionary between categories and algebras does not include Hopf algebras. This requires a notion of dual objects at the category level because at the algebra level, the Hopf algebra antipode turns the dual of any module into a module. Concretely, recall that if $H$ is a Hopf algebra with antipode $S$ and $V$ is an $H$-module, then $H$ acts on $V^*$ via $S$:
\[
	(h \rhd f)(v) = f\left(S(h) v\right),
	\qquad h \in H, \,\, f \in V^*, \,\, v \in V.
\]

Thus we introduce \textit{left} and \textit{right dualities} on a tensor category.

\begin{dfn}\cite[Definition~XIV.2.1]{Kassel}\label[defn]{left duality}
	Let $(\ccat, \otimes, I)$ be a strict tensor category with unit object $I$. It is a tensor category with \textit{left duality} if for each object $V$ of $\ccat$ there exists an object $V^*$ and morphisms
	$$b_V: I \to V \otimes V^* \quad \text{and} \quad d_V: V^* \otimes V \to I$$ satisfying the \textit{rigidity axioms}
	\begin{equation}\label{left rigidity}
		(\id_V \otimes d_V)(b_V \otimes \id_V) = \id_V, \quad \text{ and } \quad (d_{V} \otimes \id_{V^*})(\id_{V^*} \otimes b_V) = \id_{V^*}.
	\end{equation}
	For any morphism $f: V\to W$, we define the \textit{left transpose} $f^*: W^* \to V^*$ of $f$ by the formula
	\begin{align}\label[rel]{defn:transpose of f}
		f^* = (d_W \otimes \id_{V^*})(\id_{W^*} \otimes f \otimes \id_{V^*})(\id_{W^*} \otimes b_V).
	\end{align}

	Alternatively, the tensor category $\ccat$ has \textit{right duality} if for each object $V$ of $\ccat$ there exists an object $\prescript{*}{}{V}$ and morphisms
	$$b'_V\colon I \to \prescript{*}{}{V} \otimes V \quad \text{and} \quad d'_V\colon V \otimes \prescript{*}{}{V} \to I$$
	satisfying
\begin{align}\label{right rigidity}
	(d'_V \otimes \id_V)(\id_V \otimes b'_V) = \id_V
	  \quad \text{and} \quad
	  (\id_{\prescript{*}{}{V}} \otimes d'_V)(b'_V \otimes \id_{\prescript{*}{}{V}})
	   = \id_{\prescript{*}{}{V}}
\end{align}
\end{dfn}

Of course we may define the \textit{right transpose} of a morphism, but we do not need it here.

Although a right duality enjoys properties analogous to those of a left duality, they are different in general. However, when $\ccat$ is a \textit{ribbon category} the left and right duals coincide \cite[Section~XIV.5]{Kassel}.

\begin{dfn}\label[defn]{ribbon cat}
	A strict braided tensor category $\ccat$ with left duality is a \textit{ribbon category} if there exists a family $\theta_V: V \to V$ of natural isomorphisms, called \textit{twists}, such that
	\begin{align*}
		\theta_{V\otimes W} = (\theta_V \otimes \theta_W) c_{V,W}c_{W, V} \quad \text{and} \quad \theta_{V^*} = (\theta_V)^*.
	\end{align*}
 	\Cref{defn:transpose of f} defines the transpose $f^*$ of a morphism $f$.
\end{dfn}

Moreover, a ribbon structure also controls the categorical braiding: while the symmetry constraint $c_{W, V} \circ c_{V, W} = \id_{V \otimes W}$ is too strict in general, in a ribbon category we require this composition to be a kind of coboundary instead \cite{kassel_rosso_turaev_1997}. This has important implications for the braiding in ribbon categories, especially for semisimple ones.

In particular, the twists determine induced action of braid generators, up to a choice of signs, in any semisimple ribbon category. Concretely, suppose $\ccat$ is a semisimple ribbon category and let $(\theta_W)_{W \in \ccat}$ denote a family of twists in $\ccat$. For each simple object $W$, let $\check{\theta}_W$ denote the complex scalar defined by $\theta_W = \check{\theta}_W \id_W \in \End(W) \cong \mathbb{C}$. Now choose any simple object $V$. Since $\ccat$ is semisimple, there is a set $\mathcal{I}$ of simple objects in $\ccat$ and integers $N_X^{V, V}$ such that
	\begin{equation*}
		V \otimes W = \bigoplus_{X \in \mathcal{I}} N^{V, V}_X X.
	\end{equation*}
	Suppose each $N_X^{V, V} = 1$. In this case, we say the decomposition of $V \otimes V$ is \textit{multiplicity-free}. This means there are maps $i_X \colon X \to V \otimes V$ and $p_X \colon V \otimes V \to X$ for every $X \in \mathcal{I}$ satisfying
	$$
		i_X p_Y = \delta_{X, Y} \id_X
		\quad \text{and} \quad
		\sum_{X \in \mathcal{I}} i_X p_X = \id_{V \otimes V}.
	$$
	Let $P[X] \colon V \otimes V \to V \otimes V$, with $P[X] = i_X p_X$, denote the projection of $V \otimes V$ onto its subobject $X$. The set of $P[X]$, for $X \in \mathcal{I}$ is a complete set of orthogonal idempotents: 
\[
	P[X] P[Y] = \delta_{X, Y} \id_{V \otimes V}
	\quad \text{and} \quad
	\sum_{X \in \mathcal{I}} P[X] = \id_{V \otimes V}.
\]
Finally, let $\check{R} \coloneqq c_{V, V}$ denote the braiding operator on $V \otimes V$ and set $\check{R}^{V, V}_X = \sqrt{\check{\theta}_X \check{\theta}_V^{-2}} \in \mathbb{C}$ for every $X \in \mathcal{I}$.

\begin{thm}\label{semisimple braiding ribbon cat}
	With the notation of the previous paragraph in mind,
	there exists a sign $\epsilon\colon \mathcal{I} \to \{\pm 1\}$ such that 
	\begin{equation*}
		\check{R} = \sum_{X \in \mathcal{I}} \epsilon(X) \check{R}^{V, V}_X P[X].
	\end{equation*}
\end{thm}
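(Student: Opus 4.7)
The plan is to square the braiding operator via the ribbon axiom, then exploit Schur's lemma to argue that $\check{R}$ itself is already diagonal in the $P[X]$ basis.

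First, I would specialize the twist identity $\theta_{V \otimes W} = (\theta_V \otimes \theta_W)\, c_{V, W} c_{W, V}$ to $W = V$. Since $V$ is simple one has $\theta_V = \check{\theta}_V\, \id_V$, so this specialization becomes
\[
\check{\theta}_V^{2}\, \check{R}^{2} \;=\; \theta_{V \otimes V}.
\]
Naturality of the twists along the inclusions $i_X \colon X \hookrightarrow V \otimes V$ then yields $\theta_{V \otimes V} \circ i_X = i_X \circ \theta_X = \check{\theta}_X\, i_X$; combined with the completeness relation $\sum_{X \in \mathcal{I}} P[X] = \id_{V \otimes V}$, this diagonalizes $\theta_{V \otimes V} = \sum_{X \in \mathcal{I}} \check{\theta}_X\, P[X]$. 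Rearranging gives
\[
\check{R}^{2} \;=\; \sum_{X \in \mathcal{I}} \check{\theta}_V^{-2} \check{\theta}_X\, P[X] \;=\; \sum_{X \in \mathcal{I}} \bigl(\check{R}^{V, V}_X\bigr)^{2}\, P[X].
\]

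Second, I would argue that $\check{R}$ itself already decomposes through the $P[X]$ by Schur's lemma: because the decomposition of $V \otimes V$ is multiplicity-free and each $X \in \mathcal{I}$ is simple, for every pair $X, Y$ the composite $p_Y \circ \check{R} \circ i_X$ lies in $\Hom(X, Y)$, which vanishes unless $X = Y$ and is otherwise a scalar multiple of $\id_X$. Therefore $\check{R} = \sum_{X \in \mathcal{I}} \lambda_X\, P[X]$ for some scalars $\lambda_X \in \mathbb{C}$. Squaring this expression and using the orthogonality of the idempotents $P[X]$ to match coefficients with the previous display forces $\lambda_X^{2} = \bigl(\check{R}^{V, V}_X\bigr)^{2}$, whence $\lambda_X = \epsilon(X)\, \check{R}^{V, V}_X$ for a suitable sign $\epsilon(X) \in \{\pm 1\}$.

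The main obstacle is conceptual rather than computational: since $\check{R}^{V, V}_X$ is defined via an arbitrary choice of square root, the signs $\epsilon(X)$ are intrinsically ambiguous from this categorical argument alone. Pinning them down requires data beyond the twists, typically the universal $R$-matrix of a quasitriangular Hopf algebra whose module category recovers $\ccat$; this is precisely the input that the rest of the paper supplies through the quantum group picture.
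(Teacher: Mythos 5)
Your proposal is correct and follows essentially the same route as the paper: both derive $\check{R}^{2} = \sum_{X} \bigl(\check{R}^{V,V}_X\bigr)^{2} P[X]$ from naturality of the twist and the identity $\theta_{V\otimes V} = (\theta_V\otimes\theta_V)\check{R}^{2}$, and then extract $\check{R}$ up to signs. Your second step is in fact a welcome elaboration: the paper's proof ends tersely with ``the theorem follows because the $P[X]$ form a complete set of orthogonal idempotents,'' and your explicit Schur's-lemma argument that multiplicity-freeness forces $\check{R} = \sum_{X} \lambda_X P[X]$ is exactly the justification that terse sentence leaves implicit.
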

\begin{proof}
	The naturality of the twist implies $p_X \theta_{V \otimes V} = \theta_X p_X$ for any $X \in \mathcal{I}$. By definition $\theta_{V \otimes V} = (\theta_V \otimes \theta_V) \check{R}^2$ and $\theta_V \otimes \theta_V = \check{\theta}_V \check{\theta}_V$ because $V$ is simple. Substituting and rearranging yields $p_X \check{R}^2 = \check{\theta}_X \check{\theta}_V^{-2} p_X$. Pre-multiplying by $i_X$ and adding over $X \in \mathcal{I}$ implies $\check{R}^2 = \sum_X \left(\check{R}^{V, V}_X\right)^2 P[X]$. The theorem follows because the $P[X]$ form a complete set of orthogonal idempotents.
\end{proof}

Thus from now on we focus on semisimple ribbon categories, which induce braid representations characterized by their twists, as in \Cref{semisimple braiding ribbon cat}. In particular, \cite{willie_aqc} considers \textit{ribbon fusion categories} exclusively.

\begin{dfn}\label[defn]{rfc defn}
	A \textit{ribbon fusion category (RFC)} is a finite $\mathbb{C}$-linear semisimple ribbon category. 
\end{dfn}
\begin{rmk}
	In the literature, RFCs are sometimes known as \textit{premodular categories} \cite{bruguieres}. An RFC becomes a \textit{modular category} if its braiding satisfies a non-degeneracy condition making the so-called \textit{$S$-matrix} non-singular \cite[Section~II.1.4]{turaev_2016}.
\end{rmk}

	\subsection{Diagrammatical calculus}
	


\section{Conventions for Lie algebras and quantum groups}
\label{not and conv}
In this section we fix our notation for Lie algebras and recall some basic definitions. We will use $e_i$ throughout to denote the standard basis of $\mathbb{R}^n$.\longer{Let $\lieg$ denote either the reductive complex Lie algebra $\mathfrak{gl}_n$, or a finite-dimensional complex semisimple Lie algebra. We will mainly consider the cases $\lieg = \mathfrak{gl}_n, \mathfrak{so}_n$, and $\mathfrak{sp}_n$.} 

\longer{\paragraph{Lie algebra presentation}}
Following \cite{chari_pressley_1994}, we use the \textit{Chevalley presentation}. Let $\lieg$ denote a complex semisimple Lie algebra, let $A = [a_{ij}]$ denote its \textit{generalized Cartan matrix}, and let $D = \mathrm{diag}(d_1, \ldots, d_r)$ denote the diagonal matrix of root lengths such that $DA$ is symmetric positive definite. Recall $a_{ii} = 2$ and $a_{ij} \leq 0$ whenever $i\neq j$.
	
The semisimple algebra $\lieg$ is generated by $H_i, E_i$, and $F_i$, for $i = 1, \ldots, r$, subject to the relations
\begin{equation}\label{chevalley presentation of g}
	\begin{gathered}
	[H_i, H_j] = 0, \quad [H_i, E_j] = a_{ij} E_j, \quad [H_i, F_j] = -a_{ij} F_j, \quad [E_i, F_j] = \delta_{ij} H_i,  \\
	\end{gathered}
\end{equation}
along with the \textit{Serre relations} for $i \neq j$:
\begin{equation}\label{Serre rels of g}
	(\mathrm{ad}_{E_i})^{1-a_{ij}}(E_j) = 0, \quad \text{and} \quad (\mathrm{ad}_{F_i})^{1-a_{ij}}(F_j) = 0
\end{equation}
\begin{remark}
	Authors like Jantzen \cite{Jantzen96}, Lusztig \cite{lusztig_1988}, and Sawin \cite{sawin_1996} prefer the notation $X_i^+ = E_i$ and $X_i^- = F_i$\longer{, suggestive of positive and negative roots in general, and of ``raising'' and ``lowering'' operators in this context}.
\end{remark}

We will denote by $\Ug$ the \textit{universal enveloping algebra} associated to $\lieg$\longer{ \cite[Chapter~10]{Bump}. This associative unital algebra has the same generators as $\lieg$, subject to relations in \eqref{chevalley presentation of g}, where in this case $[A, B] = AB - BA$ denotes the usual algebra commutator. The generators are also subject to the Serre relations \eqref{Serre rels of g}, which are written in this context as
\begin{equation*}\label{classical Serre relations}
	\sum_{k=0}^{1-a_{ij}}(-1)^k \binom{1-a_{ij}}{k} (X_i)^k X_j (X_i)^{1- a_{ij}-k} = 0.
\end{equation*}
In the last equality $X$ is either an $E$ or an $F$}.

Now fix the \textit{Cartan subalgebra} $\mathfrak{h}$ generated by the $H_i$\longer{ and let $\mathfrak{b}$ denote the \textit{Borel subalgebra} containing $\mathfrak{h}$}. The \textit{simple roots} of $\lieg$ are the linear functionals $\alpha_i\colon \mathfrak{h} \to \mathbb{C}$ satisfying
\begin{equation*}
	\alpha_i(H_j) = a_{ji}.
\end{equation*}
\longer{We use $\varPhi$ to denote the set of simple roots and we set 
\begin{equation*}
	\mathcal{Q} = \bigoplus_{\alpha \in \varPhi} \mathbb{Z} \alpha 
	\quad \text{and} \quad 
	\mathcal{Q}^+ = \bigoplus_{\alpha \in \varPhi} \mathbb{Z}_+ \alpha.
\end{equation*}}
We let
\begin{equation}\label{weights and dominant weights}
	\mathcal{P} = \{ \lambda \in \mathfrak{h}^* \mid \lambda(H_i) \in \mathbb{Z} \} \quad \text{and} \quad \mathcal{P}^+ = \{ \lambda \in \mathfrak{h}^* \mid \lambda(H_i) \in \mathbb{Z}_+ \}
\end{equation}
denote the lattice of \textit{weights} and \textit{dominant weights}, respectively.

\longer{\paragraph{Bilinear form}}
When $\lieg$ is semisimple, there exists a unique non-degenerate symmetric invariant bilinear form $\langle , \rangle\colon \lieg \times \lieg \to \mathbb{C}$ such that
\begin{gather*}
\begin{split}
	\langle H_i, H_j \rangle = \inv{d_j} a_{ij}, \quad \langle H_i, E_j \rangle = \langle H_i, F_j \rangle = 0, \\
	\langle E_i, E_j \rangle = \langle F_i, F_j \rangle =  0,  \quad \text{and} \quad \langle E_i, F_j \rangle = \inv{d_i} \delta_{ij},
\end{split}
\end{gather*}
for all $i, j$ \cite[Theorem~2.2]{kac_1985}. When $\lieg = \gln$ we take $\langle , \rangle$ to be the non-degenerate trace bilinear form of the natural representation.
	
The bilinear form $\langle , \rangle$ induces an isomorphism of vector spaces $\nu\colon \mathfrak{h} \to \mathfrak{h}^*$ under which $H_i$ corresponds to the \textit{coroot} $\alpha_i^\vee = \inv{d_i} \alpha_i$. The induced form on $\mathfrak{h}^*$ satisfies
\begin{equation}\label{inner prod on roots}
	\langle \alpha_i, \alpha_j \rangle = d_i a_{ij}.
\end{equation}
Since $a_{ii} = 2$, we must have $d_i = \langle \alpha_i, \alpha_i \rangle / 2$. Thus we obtain a formula for the \textit{Cartan integers}:
\begin{equation}\label{cartan ints as inn prod}
	a_{ij} = \langle \alpha_i^\vee, \alpha_j \rangle 
	= { 
		2 \langle \alpha_i^\vee, \alpha_j \rangle 
		\over 
		\langle \alpha_i, \alpha_i \rangle
	  }.
\end{equation} 
Notice the bilinear form induced by $\nu$ is normalized so that 
$$\langle \alpha_i, \alpha_i \rangle = 2d_i.$$
When $\lieg$ is simple\longer{, the Cartan matrix $A$ has entries $a_{ij} \in \{-3, -2, -1, 0, 2\}$, so without loss of generality we may assume that $D$ has entries $d_i \in \{1, 2, 3\}$, which implies} $\langle \alpha_i, \alpha_i \rangle = 2$ for \textit{short} roots.

For concreteness and convenience, we record some relevant Cartan matrices: 
\begin{align}\label{cartan mats}
\begin{gathered}
	A_n = 
	\begin{bmatrix*}[r]
	2  & -1     &        &        & \\
	-1 & 2      & -1     &        & \\
	   & \ddots & \ddots & \ddots & \\
	   &        & -1     & 2      & -1 \\
	   &        &        & -1     & 2
	\end{bmatrix*}, 
	\\ \\
	B_n = 
	\begin{bmatrix*}[r]
	\begin{matrix}
	& & &&\\
	& A_{n-1} &&& \\
	& & &&
	\end{matrix} \rvline
	& 
	\begin{matrix*}[c]
	0 \\ \vdots \\ 0 \\ -1
	\end{matrix*} \\
	\cmidrule(lr){1-1}
	\begin{matrix*}[l]
	0 & \cdots & 0 & -2
	\end{matrix*}
	& 2
	\end{bmatrix*}, 
	\quad \text{and} \quad 
	D_n = 
	\begin{bmatrix*}[r]
	\begin{matrix}
	& & &&&\\
	& A_{n-1} &&&& \\
	& & &&&
	\end{matrix} \rvline
	& 
	\begin{matrix*}[c]
	0 \\ \vdots \\ 0 \\ -1 \\ 0
	\end{matrix*} \\
	\cmidrule(lr){1-1}
	\begin{matrix*}[l]
	0 & \cdots & 0 & -1 & 0
	\end{matrix*}
	& 2
	\end{bmatrix*}.
\end{gathered}
\end{align}
We label the matrices by the Lie type of the root system to which they are associated. The corresponding diagonal   root lengths matrices are described by $d = (1, \ldots, 1)$ for the root systems of types $A_n, D_n$, and by $d = (2, \ldots, 2, 1)$ for type $B_n$.

If $\alpha \in \mathcal{Q}$, define the \textit{root space} 
$$\lieg_\alpha = \{ x \in \lieg \mid [h, x] = \alpha(h) x \text{ for all } h \in \mathfrak{h}\}.$$
The elements of $\Delta = \{ \alpha \in \mathcal{Q} \mid \alpha \neq 0, \lieg_\alpha \neq 0 \}$ are the \textit{roots} of $\lieg$. The intersection $\Delta^+ = \Delta \cap \mathcal{Q}^+$ denotes the set of \textit{positive roots}. The \textit{negative roots} are given by $\Delta^- = - \Delta^+$ and $\Delta = \Delta^+ \coprod \Delta^-$ \cite[Chapter 1]{kac_1985}. Distinct root spaces are orthogonal with respect to the bilinear form $\langle , \rangle$:
$$\langle \lieg_\alpha, \lieg_\beta \rangle \text{ if } \alpha \neq \beta \text{ and }  \langle \lieg_\alpha, \mathfrak{h} \rangle \text{ if } \alpha \neq 0$$
\cite[Theorem 2.2(c)]{kac_1985}. The Lie algebra $\lieg$ has a triangular decomposition with respect to its roots. As vector spaces, 
\begin{equation*}\label{triangular decomp g}
	\lieg = \big(\bigoplus_{\alpha < 0} \lieg_\alpha \big) \oplus \mathfrak{h} \oplus \big(\bigoplus_{\alpha > 0} \lieg_\alpha \big).
\end{equation*}

We let $\rho$ denote the half sum of the positive roots in $\mathfrak{h}^*$. Alternatively, the \textit{Weyl vector} $\rho$ is uniquely characterized by $\langle \alpha_i^\vee, \rho \rangle = 1$. Thus its image $\rho^* \in \mathfrak{h}$ under $\inv{\nu}$ satisfies $\alpha_i(\rho^*) = d_i$. Concretely, if $b_i = \sum_j (\inv{A})_{ji} d_j$, then
\begin{equation}\label{weyl vector}
	\rho = \sum_{i} b_i \alpha_i^\vee \quad \text{and} \quad \rho^* = \sum_{i} b_i H_i.
\end{equation}

The \textit{fundamental weights} $\omega_i \in \mathfrak{h}^*$  are defined by $\omega_i(H_j) = \delta_{ij}$, or equivalently, $\langle \omega_i, \alpha_j^\vee \rangle = \delta_{ij}$. The $\omega_i$ form a basis of $\mathcal{P}$ and clearly
$$\alpha_i = \sum_j a_{ji} \omega_j.$$ 
Since $C = DA$ is symmetric $\alpha_i = \sum_j c_{ij} (\inv{d_j} \omega_j)$ and therefore
$$\omega_i = \sum_j (D\inv{A})_{ij} \alpha_j^\vee$$
because $C$ is invertible. Then
$\langle \omega_i, \omega_j \rangle = d_i (\inv{A})_{ij}$ follows from the definition of $\omega_i$. 

Now consider any weights $\lambda = \sum_i \lambda_i \omega_i$ and $\mu = \sum_i \mu_i \omega_i$, and notice  $\lambda(H_i) = \lambda_i$ and similarly $\mu(H_i) = \mu_i$. Therefore
\begin{equation}\label{scalar product of weights}
	\langle \lambda, \mu \rangle = \sum_{ij} \lambda_i \mu_j \langle \omega_i, \omega_j \rangle = \sum_{ij} (D\inv{A})_{ij} \lambda(H_i) \mu(H_j).
\end{equation}

\longer{\paragraph{Casimir element}}
Let $X_\alpha$ denote any basis of $\lieg$, and let $X^\alpha$ denote the corresponding dual basis with respect to $\langle \cdot, \cdot \rangle$. The next formula uniquely characterizes the \textit{Casimir element}
\begin{equation}\label{casimir elt}
C = \sum_{\alpha} X_{\alpha}  X^{\alpha}  = \sum_{\alpha} X^{\alpha}  X_{\alpha} \in \Ug.
\end{equation}
The Casimir element is in fact canonical and central \cite[Theorem~10.2]{Bump}. In addition, $C$ acts as the scalar 
\begin{equation}\label{casimir eigenvalues}
	\chi_{\lambda}(C) = \langle \lambda, \lambda + 2\rho\rangle
\end{equation}
on any irreducible $\Ug$-module with highest weight $\lambda$ \cite[Corollary~2.6]{kac_1985}.

\longer{\paragraph{Quantum groups and their representations}}
In what follows, the term ``quantum group'' refers to an associative Hopf algebra $\Uqg$ as presented in \cite[Definition~9.1.1]{chari_pressley_1994}. For convenience, we reproduce the definition. 

Let $\lieg$ denote a semisimple Lie algebra and let $A$ denote its generalized Cartan matrix with the diagonal matrix $D = \mathrm{diag}(d_1, \ldots, d_r)$ such that $DA$ is symmetric. Let $q$ be \textit{indeterminate} and write $q_i = q^{d_i}$.
	The \textit{quantum group} $\Uqg$ is the associative unital algebra over $\mathbb{C}(q)$ generated by the elements $E_i, F_i, K_i$, and $\inv{K_i}$, for $i = 1, \ldots, r$, subject to the relations:
\begin{equation}\label{uq rels}
\begin{gathered}
K_i K_j = K_j K_i, \quad K_i \inv{K_i} = \inv{K_i} K_i = 1, \\
K_i E_j \inv{K_i} = q_i^{a_{ij}} E_j, \quad K_i 
F_j \inv{K_i} = q_i^{-a_{ij}} F_j,  \\
E_i F_j - F_j E_i = \delta_{ij} \frac{K_i - \inv{K_i}}{q_i - \inv{q_i}},
\end{gathered}	
\end{equation}
together with the $q$-Serre relations for $i \neq j$,
\begin{equation}\label{uq Serre rels}
	\sum_{k = 0}^{1-a_{ij}} (-1)^k 
	\begin{bmatrix}
	1 - a_{ij} \\ k
	\end{bmatrix}_{q_i} 
	X_i^k X_j X_i^{1-a_{ij}-k} = 0.
\end{equation}
In the last equality every $X$ is either an $E$ or an $F$. The $q$-integer is given by $[n]_{q_i} = \frac{q_i^n - q_i^{-n}}{q_i - \inv{q_i}}$, the $q$-factorial $[n]_{q_i}! = [n]_{q_i}\cdots [1]_{q_i}$, and the $q$-binomial coefficient is defined analogously. 

Unless otherwise stated, we assume $\Uqg$ is equipped with the co-algebra structure defined by the comultiplication map $\Delta\colon \Uqg \to \Uqg^{\otimes 2}$ satisfying
\begin{align}\label{delta convention}
	\begin{split}
		\Delta(E_i) &= E_i \otimes K_i + 1 \otimes E_i \\
		\Delta(F_i) &= F_i \otimes 1 + \kinv \otimes F_i, 
		\quad \text{and} \\
		\Delta(K_i) &= K_i \otimes K_i,
	\end{split}
\end{align}
as in \cite{lzz_2010}. 

In this work, we only consider finite-dimensional type $(1, \ldots, 1)$ modules for the quantum groups $\Uqg$. Refer to Chapters~9 and~10 in \cite{chari_pressley_1994} for definitions. Every type $(1, \ldots, 1)$ finite-dimensional $\Uqg$-module is semisimple \cite[Theorem~5.17]{Jantzen96} and each simple $\Uqg$-module is in one-to-one correspondence with a $\lieg$-module parametrized by a dominant weight of $\lieg$ \cite[Theorem~5.10]{Jantzen96}. Moreover, corresponding modules have the same weight multiplicities \cite[Lemma~5.14]{Jantzen96}.

\section{Ribbon categories from quantum groups}
\label{rfc from q gps}
\longer{Recall \ref{not and conv}. Let $\lieg$ denote a finite-dimensional complex semisimple Lie algebra or $\gln$.} 

In this section we show that the category $\Uqgmodcat$ of finite-dimensional modules over the Drinfeld-Jimbo quantum group $\Uqg$ is a semisimple ribbon category. The interested reader may refer to \cite{ap_95} to construct RFCs as finite quotients of $\Uqgmodcat$.

In addition, we show that the braiding on $\Uqgmodcat$ is characterized by an infinitesimal braiding on the category of finite-dimensional $\Ug$-modules, which is itself characterized by the infinitesimal braiding on $\Ug$ induced its Casimir element. \Cref{inf_braids} recalls infinitesimal braidings. This characterization of the braiding on $\Uqgmodcat$ has a geometric interpretation in terms of the Knizhnik-Zamolodchikov (KZ) equations; the connection was developed in the works \cite{Dri_QHA_KZ_eqns,kazhdan_lusztig_1993,Kohno} by Drinfeld, Kazhdan and Lusztig, and Kohno, amongst others.

A key step in our construction is showing that the braiding on $\Uqgmodcat$ is generated by the universal $R$-matrix of the topological ribbon algebra $\widetilde{\Uqg}$, defined in \Cref{uqg_modcat} as a certain completion of $\Uqg$. This requires an understanding of topological quantized enveloping algebras, which is developed in \Cref{top_que}. In particular, \Cref{top_que} culminates in a Drinfeld-Kohno theorem: the braiding on \textit{any} topological quantized enveloping algebra over $\lieg$ is generated by some infinitesimal braiding on $\Ug$. 

We omit most proofs where precise references are provided.

	\subsection{Infinitesimal braids}
	\label{inf_braids}
	We recall the notion of infinitesimal braids and show $\Ug$ carries infinitesimal braidings. This means the collection of finite-dimensional $\Ug$-modules is an infinitesimal symmetric category in the sense of \Cref{infinitesimal braided category}. 

\begin{dfn}
	The \textit{infinitesimal braid algebra} $\mathfrak{p}_n$ is the unital associative $\mathbb{C}$-algebra generated by $t_{ij}$, $1 \leq i \neq j \leq n$ subject to the following relations, for all pairwise distinct $i,j,k,l$:
	\begin{equation}\label[rel]{rel:inf braid relations}
	t_{ij} = t_{ji}, \qquad [t_{ij}, t_{kl}] = 0, \qquad [t_{ij}, t_{ik} + t_{jk}] = 0.
	\end{equation}
	Here $[\cdot, \cdot]$ denotes the usual algebra commutator.
\end{dfn}

The \textit{infinitesimal braidings} on $\Ug$, that is, the homomorphisms $\mathfrak{p}_n \to \Ug^{\otimes n}$, are determined by structural properties of $\Ug$ as a cocommutative Hopf algebra. Thus we study infinitesimal braidings in this more general context first.

Let $H$ denote a cocommutative Hopf algebra. Every \textit{symmetric invariant $2$-tensor} on $H$ determines an infinitesimal braiding on $H^{\otimes n}$.

\begin{dfn}\label[defn]{symm inv 2-tensor}
Let $\mathrm{Prim}(H) = \{x \in H \mid \Delta(x) = x \otimes 1 + 1 \otimes x \}$ denote the subset of \textit{primitive elements} in $H \otimes H$. A \textit{symmetric invariant 2-tensor} on $H$ is an element $\theta \in \mathrm{Prim}(H) \otimes \mathrm{Prim}(H)$ such that $\theta_{21} = \theta$ and $[\Delta(x), \theta] = 0$ for all $x \in H$.
\end{dfn}

\begin{prop}\cite[Lemma~XIX.3.2]{Kassel}\label[prop]{symm inv tensors satisfy inf braid rel}
	Let $\theta = \sum_a x_a \otimes y_a$ denote a symmetric invariant $2$-tensor on $H$. For all $i \neq j$, let $\theta_{ij}$ denote the element of $H^{\otimes n}$ defined by
	$$
		\theta_{ij} =
		\sum_a x_a^{(1)} \otimes \cdots \otimes x_a^{(n)},
	$$
	where $x_a^{(i)} = x_a$, $y_a^{(i)} = y_a$, and $x_k^{(\ell)} = 1$ otherwise. There is an algebra homomorphism $\mathfrak{p}_n \to H^{\otimes n}$ satisfying $t_{ij} \to \theta_{ij}$.
\end{prop}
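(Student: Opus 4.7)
The plan is to verify directly that the assignment $t_{ij} \mapsto \theta_{ij}$ respects each of the three families of defining relations in \Cref{rel:inf braid relations}. Since $\mathfrak{p}_n$ is presented by generators and relations, this is exactly what is needed.

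The symmetry relation $\theta_{ij} = \theta_{ji}$ is immediate from the hypothesis $\theta_{21} = \theta$, i.e.\ $\sum_a x_a \otimes y_a = \sum_a y_a \otimes x_a$: swapping which of the two non-trivial slots holds the $x$'s versus the $y$'s yields the same element of $H^{\otimes n}$. The far commutativity relation $[\theta_{ij}, \theta_{kl}] = 0$ for pairwise distinct $i, j, k, l$ is clear because the non-trivial tensor legs of $\theta_{ij}$ sit in slots $i, j$ while those of $\theta_{kl}$ sit in slots $k, l$; in each slot, at least one of the two operators is the unit $1 \in H$, so the two elements commute factor by factor.

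The interesting relation is the third one: $[\theta_{ij}, \theta_{ik} + \theta_{jk}] = 0$ for pairwise distinct $i, j, k$. Without loss of generality I take $(i, j, k) = (1, 2, 3)$ and work in $H^{\otimes 3}$; the reduction is harmless because all other slots contain $1$s, which are central. Writing $\theta = \sum_b x_b \otimes y_b$, the primitivity of each $x_a$ lets me rewrite
\[
    \theta_{13} + \theta_{23} = \sum_a (x_a \otimes 1 + 1 \otimes x_a) \otimes y_a = \sum_a \Delta(x_a) \otimes y_a = (\Delta \otimes \id)(\theta).
\]
Because $y_a$ lives in the third slot and commutes with everything in the first two, I can then compute
\[
    [\theta_{12}, \theta_{13} + \theta_{23}] = \sum_{a,b} [x_b \otimes y_b,\, \Delta(x_a)] \otimes y_a = \sum_a \Bigl(\sum_b [x_b \otimes y_b,\, \Delta(x_a)]\Bigr) \otimes y_a.
\]
The invariance hypothesis $[\Delta(x), \theta] = 0$ for all $x \in H$, applied to $x = x_a$, forces $\sum_b [x_b \otimes y_b, \Delta(x_a)] = 0$ for every index $a$, so the entire sum vanishes.

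There is no real obstacle; the only thing to be careful about is the bookkeeping in the reduction to $n = 3$, ensuring that the unit factors in the unused slots genuinely commute past every relevant operator. The conceptual content is that primitivity of the $x_a$ is precisely what converts $\theta_{ik} + \theta_{jk}$ into $(\Delta \otimes \id)(\theta)$, at which point the invariance axiom from \Cref{symm inv 2-tensor} delivers the vanishing of the commutator.
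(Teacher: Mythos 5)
Your proof is correct and follows essentially the same route as the paper's: symmetry gives $\theta_{ij}=\theta_{ji}$, far commutativity is checked slot by slot, and the third relation reduces to $n=3$, where primitivity identifies $\theta_{13}+\theta_{23}$ with $(\Delta\otimes\id)(\theta)$ so that the invariance axiom $[\Delta(x),\theta]=0$ kills the commutator. The only cosmetic difference is that you make the role of primitivity explicit where the paper simply writes $\Delta(x_k)=x_k\otimes 1+1\otimes x_k$ without comment.
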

\longer{\begin{proof}
	Since $\theta$ is symmetric, $\theta_{ij} = \theta_{ji}$. The remaining relations follow from the invariance of $\theta$. Without loss of generality, we may assume $n = 3$. Since $[\theta, \Delta(x)] = 0$ for every $x \in H$, it follows that
	\begin{align*}
	[\theta_{12}, \theta_{13} + \theta_{23}] &= \sum_{k, \ell} [x_\ell \otimes y_\ell \otimes 1, x_k \otimes 1 \otimes y_k + 1 \otimes x_k \otimes y_k] \\
	&= \sum_k \bigg[\sum_{\ell} x_\ell \otimes y_\ell, x_k \otimes 1 + 1 \otimes x_k \bigg] \otimes y_k \\
	&= \sum_k [\theta, \Delta(x_k)] \otimes y_k \\
	&= 0.
	&\qedhere
	\end{align*}
\end{proof}}

We will also need the dual notion at the level of categories.

\begin{dfn}\cite[Definition~XX.4.1]{Kassel}\label[defn]{infinitesimal braided category}
An \textit{infinitesimal braiding} on a strict monoidal category $\mathcal{S}$ with involutive braiding $\sigma_{V, W}: V \otimes W \to W \otimes V$ is a family of functorial endomorphisms 
$t_{V, W} : V \otimes W \to V \otimes W,$ 
	defined for all pairs $(V, W)$ of objects in $\mathcal{S}$, which statisfies
\begin{equation}\label[rel]{inf braiding rel1}
\sigma_{V, W} \circ t_{V, W} = t_{W, V} \circ \sigma_{V, W},
\end{equation}
	and 
\begin{equation}\label[rel]{inf braiding rel2}
t_{U,V \otimes W} = t_{U, V} \otimes \id_W + \inv{(\sigma_{U, V} \otimes \id_W)} \circ (\id_V \otimes t_{U, W}) \circ (\sigma_{U, V} \otimes \id_W)
\end{equation}
	for all objects $U, V, W$ in $\mathcal{S}$.

An \textit{infinitesimal symmetric category} is a symmetric braided category equipped with an infinitesimal braiding. 
\end{dfn}

Observe that \Cref{inf braiding rel1} implies \Cref{inf braiding rel2} is equivalent to 
\begin{equation}\label[rel]{inf braiding equiv rel}
t_{U \otimes V,  W} = \id_U \otimes t_{V, W}   + \inv{(\id_U \otimes \sigma_{V, W})} \circ (t_{U, W} \otimes \id_V ) \circ(\id_U \otimes \sigma_{V, W}).
\end{equation}

The following proposition partially explains the terminology: it shows \Cref{inf braiding rel2,inf braiding equiv rel} are infinitesimal versions of the Hexagon \Cref{hexagon1,hexagon2} imposed on a braiding in a braided category. 

\begin{prop}\label[prop]{braiding as deformation of inf braiding}
Let $\ccat$ be a braided category in which the morphisms are given as formal series in a parameter $h$. In particular, suppose the braiding $c_{V, W}$ is of the form 
\begin{equation*}
c_{V, W} = \sigma_{V, W}(\id_{V \otimes W} + h t_{V, W} + O(h^2))
\end{equation*}
for some involutive symmetry $\sigma_{V, W}$. Here $O(h^2)$ denotes a sum of terms of degree $2$ or higher in $h$. Then $t_{V, W}$ satisfies \Cref{inf braiding rel2,inf braiding equiv rel}. 
\end{prop}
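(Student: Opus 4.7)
The strategy is to expand the Hexagon axioms in powers of the formal parameter $h$ and match coefficients. Without loss of generality we may assume $\ccat$ is strict, so that \Cref{hexagon1,hexagon2} reduce to
\begin{equation*}
c_{U, V \otimes W} = (\id_V \otimes c_{U, W}) \circ (c_{U, V} \otimes \id_W),
\qquad
c_{U \otimes V, W} = (c_{U, W} \otimes \id_V) \circ (\id_U \otimes c_{V, W}).
\end{equation*}
Substituting $c_{V, W} = \sigma_{V, W}(\id + h\, t_{V, W} + O(h^2))$ into each factor and expanding, I obtain equations in $\End(U \otimes V \otimes W)[[h]]$ whose coefficients at each order of $h$ must match.

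At order $h^0$, both Hexagons reduce to the Hexagon identities for the involutive symmetry $\sigma$, namely $\sigma_{U, V\otimes W} = (\id_V \otimes \sigma_{U, W})(\sigma_{U, V} \otimes \id_W)$ from the first Hexagon and $\sigma_{U \otimes V, W} = (\sigma_{U, W} \otimes \id_V)(\id_U \otimes \sigma_{V, W})$ from the second. These are exactly the identities I need as a lever at the next order. At order $h^1$, expanding the first Hexagon yields
\begin{equation*}
\sigma_{U, V \otimes W} \circ t_{U, V \otimes W}
= (\id_V \otimes \sigma_{U, W})(\id_V \otimes t_{U, W})(\sigma_{U, V} \otimes \id_W)
+ (\id_V \otimes \sigma_{U, W})(\sigma_{U, V} \otimes \id_W)(t_{U, V} \otimes \id_W),
\end{equation*}
where I used that the $h^0$ contribution cancels by the Hexagon for $\sigma$. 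I then use the $h^0$ identity to replace $(\id_V \otimes \sigma_{U, W})(\sigma_{U, V} \otimes \id_W)$ by $\sigma_{U, V \otimes W}$ in the second summand, and pre-compose with $\sigma_{U, V \otimes W}^{-1}$ on both sides. Invertibility of $\sigma$ (a consequence of involutivity, which gives $\sigma_{U, V \otimes W}^{-1} = \sigma_{V \otimes W, U}$) is what lets me carry this out. After rearrangement the first summand becomes $(\sigma_{U, V} \otimes \id_W)^{-1}(\id_V \otimes t_{U, W})(\sigma_{U, V} \otimes \id_W)$, yielding precisely \Cref{inf braiding rel2}.

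An entirely analogous calculation starting from the second Hexagon, with $t_{U, W} \otimes \id_V$ and $\id_U \otimes t_{V, W}$ playing symmetric roles, produces \Cref{inf braiding equiv rel}. The work is essentially bookkeeping; the only genuine input beyond the order-$h^0$ Hexagon for $\sigma$ is the invertibility of the symmetry constraints needed to conjugate the surviving $t$-terms into the stated form. I expect the main source of friction to be keeping track of which factor each tensor component acts on and which $\sigma$'s are available for substitution, but no conceptual obstacle arises.
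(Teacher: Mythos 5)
Your proof is correct and follows essentially the same route as the paper: expand the strict Hexagon axiom to first order in $h$, use the order-$h^0$ Hexagon identity for $\sigma$ together with its invertibility to conjugate the surviving $t$-term, and read off \Cref{inf braiding rel2}. The only (harmless) difference is that you derive \Cref{inf braiding equiv rel} by repeating the computation with the second Hexagon, whereas the paper obtains it from \Cref{inf braiding rel2} via \Cref{inf braiding rel1} and the equivalence noted just before the proposition.
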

\begin{proof}
In view of \Cref{inf braiding rel1}, we need only verify \Cref{inf braiding rel2}. Together with $(\sigma_{U, V} \otimes \id_W)(\id_V \otimes \sigma_{U, W})  = \sigma_{U, V \otimes W}$, \Cref{inf braiding rel1} implies
\begin{align*}
(c_{U, V} &\otimes \id_W) (\id_V \otimes c_{U, W}) 
= (\id_V \otimes \sigma_{U, W}) (\sigma_{U, V} \otimes \id_W) \\
&\qquad + h(\id_V \otimes \sigma_{U, W}) \left((\sigma_{U, V}\otimes \id_W)(t_{U, W} \otimes \id_W) + (\id_V \otimes t_{U, W})(\sigma_{U, V} \otimes \id_W)\right) \\
&= \sigma_{U, V \otimes W}(1 + h ((t_{U, W} \otimes \id_W) + \inv{(\sigma_{U, V} \otimes \id_W)}(\id_V \otimes t_{U, W})(\sigma_{U, V} \otimes \id_W)),
\end{align*}
modulo $h^2$. The Hexagon \Cref{hexagon1} implies 
	\begin{equation*}
	c_{U, V\otimes W} = (c_{U, V} \otimes \id_W) (\id_V \otimes c_{U, W}).
	\end{equation*}
	Identifying the linear terms in the last equality proves the claim.
\end{proof}

Since $H$ is cocommutative, the category $\operatorname{H-Mod}$ of modules over $H$ is a symmetric braided category with the flip map $\tau_{V, W}$ taking $v \otimes w \to w \otimes v$ as its symmetry. 

Moreover, there is a one-to-one correspondence between infinitesimal braidings on $\operatorname{H-Mod}$ and symmetric invariant $2$-tensors on $H$. On one hand, each symmetric invariant $2$-tensor on $H$ defines an infinitesimal braiding on $\operatorname{H-Mod}$: \Cref{symm inv tensors satisfy inf braid rel} implies that if $\theta$ is a symmetric invariant $2$-tensor, there is an infinitesimal braiding on $\operatorname{H-Mod}$ satisfying 
\begin{equation}\label{inf braiding from inv symm tensor}
t_{V, W} (v \otimes w) = \theta \rhd v \otimes w
\end{equation}
for every pair of $H$-modules $V, W$. Conversely, every infinitesimal braiding on $\operatorname{H-Mod}$ arises this way: if $t_{V, W}$ denotes an infinitesimal braiding on $\operatorname{H-Mod}$ then $\theta = t_{H, H}(1 \otimes 1)$ is a symmetric invariant $2$-tensor.

We can characterize the infinitesimal braidings on an infinitesimal symmetric category more precisely when there is additional structure in the category, such as a left duality in the sense of \Cref{left duality}. In such categories, every infinitesimal braiding has the form \cite[Section~XX.4]{Kassel}
\begin{equation}\label{tvw if left duality}
	t_{V, W} = \frac{1}{2}\big(C_{V\otimes W} - \id_V\otimes C_W - C_V  \otimes \id_W\big),
\end{equation}
with $C_V: V \to V$ denoting the natural endomorphism defined by
\begin{equation}\label{cv if left duality}
C_V = - \bigg(\id_V \otimes\big( d_V \circ t_{V^*, V}\big)\bigg) \circ (b_V \otimes \id_V).
\end{equation}

Thus since the subcategory $\Hmodcat$ of finite-dimensional $H$-modules has left duality,  every infinitesimal braiding on $\Hmodcat$ is induced by the action of a \textit{single} element $\sum_i x_i y_i \in H$, such that $\theta = \sum_i x_i \otimes y_i$ is a symmetric invariant $2$-tensor.

Specializing $H = \Ug$, this means each infinitesimal braiding on $\Ugmodcat$ is induced by the action of an element $\sum_i x_i y_i$, with $x_i, y_i \in \mathrm{Prim}(\Ug) = \lieg$. An important example concerns the Casimir element $C = \sum_\alpha X_\alpha X^\alpha$ introduced in \Cref{casimir elt}. Note that the associated $2$-tensor
\begin{equation}\label{t via casimir}
	t = {1 \over 2}\left(\Delta(C) - C \otimes 1 - 1 \otimes C\right) 
	= \sum_\alpha X_\alpha \otimes X^\alpha
\end{equation}
is symmetric and it is invariant because $C$ is canonical and central in $\Ug$. 

We shall see in \Cref{uqg_modcat} that this infinitesimal braiding generates the braiding on the category of finite-dimensional $\Uqg$-modules. To see this, however, we will need to go through a certain topological quantized enveloping algebra first. 
	
	\subsection{(Topological) quantized enveloping algebras}
	\label{top_que}
	We define topological quantized enveloping algebras (QUEs) in the sense of \cite{Dri_qgps}. As a certain deformation of $\Ug$, a QUE is equipped with a braiding that is generated infinitesimally by a symmetric invariant $2$-tensor on $\Ug$. The Drinfeld-Kohno \Cref{every que is Agtheta} makes this precise. We single out the QUE $\Uh$ because it carries a braiding whose action on any tensor product is equivalent to that induced by the matrix $e^{ht}$, with $t$ as in \Cref{t via casimir}, and, in addition, it has a ribbon element.

We begin by recalling topological algebras and deformations. Fix a parameter $h$ and let $\mathbb{C}[[h]]$ denote the ring of formal power series in $h$ with complex coefficients.

\begin{dfn}\cite[A.4.1]{bonneau_flato_gerstenhaber_pinczon_1994}\label[defn]{topological alg}
	A \textit{topological vector space (t.v.s.)} is a complex vector space with a locally convex Hausdorff topology. A \textit{topological algebra} over a ring $R$ is a triple $(A, \mu, \eta)$ where $A$ is a complete t.v.s., and the multiplication $\mu: A \widetilde{\otimes} A \to A$ and unit $\eta: R \to A$ are continuous linear maps satisfying
	\begin{equation*}
	\mu \circ (\mu \widetilde{\otimes} \id_A) = \mu \circ (\id_A \widetilde{\otimes} \mu)
	\quad \text{and} \quad
	\mu \circ (\eta \widetilde{\otimes} \id_A) = \id_A = \mu \circ (\id_A \widetilde{\otimes} \eta)
	\end{equation*}
	with $\widetilde{\otimes}$ denoting the completion of the projective topological tensor product. 
\end{dfn}
\begin{rmk}
	When the topology of the t.v.s. $A$ is generated by a family of seminorms $(p_{\alpha})_{\alpha \in \mathcal{F}}$, the topology on $A \widetilde{\otimes} A$ is generated by the family $(p_\alpha \otimes p_\beta)_{\alpha, \beta}$ of all possible tensor products of the $(p_{\alpha})_\alpha$ \cite{grothendieck_1955}. Recall each tensor product $p_\alpha \otimes p_\beta: A \widetilde{\otimes} A \to \mathbb{C}$ satisfies
\[
	(p_\alpha \otimes p_\beta)(z) = \inf_{(x_i), (y_i)} \sum_{1\leq i \leq m} p_\alpha(x_i) p_\beta(y_i).
\]
	for any $z = \sum_{1\leq i \leq m} x_i \otimes y_i$.
\end{rmk}

\begin{dfn}\cite[Definition~1]{Bonneau}
	A \textit{deformation} of a $\mathbb{C}$-algebra $A$ is a $\mathbb{C}[[h]]$-algebra $\tilde{A}$ such that $\tilde{A} / h A \cong A$. A deformation is \textit{trivial} if it is isomorphic to $A$ considered as a $\mathbb{C}[[h]]$-algebra by base field extension. Two deformations are \textit{equivalent} if they are isomorphic as $\mathbb{C}[[h]]$-algebras. 
\end{dfn}

Now set $A = \Ug[[h]]$. Then $A$ becomes a topological algebra when we equip it with the $h$-adic topology: since the topology is metrizable, the completion of the projective topological tensor product coincides with the completion of $A \otimes A$ in the $h$-adic topology; that is, 
\[
	A \widetilde{\otimes} A = (\Ug \otimes \Ug)[[h]].
\]

As topological \textit{algebras}, deformations of $\Ug$ are not very interesting: the next theorem shows that the multiplication on every deformation is essentially the same.
 	
\begin{thm}\label{rigidity of Lie alg}
 	Let $(A, \mu, \eta)$ be a deformation of $\Ug$ such that $A = \Ug[[h]]$ as a $\mathbb{C}[[h]]$-module and suppose the unit $\eta(1)$ is the constant formal series $1$ in $\Ug[[h]]$. There is a unique isomorphism $\alpha\colon A \to \Ug[[h]]$ of topological algebras inducing the identity $A / hA \to \Ug$ modulo $h$ and satisfying $\alpha = \id \mod h$.
	
	Moreover, if $\lieg'$ is any other semisimple Lie algebra and $\beta, \beta'$ are any two morphisms of topological algebras from $A$ to $U(\lieg')[[h]]$ such that $\beta \equiv \beta'$ modulo $h$, there exists an element $F$ in $U(\lieg')[[h]]$ such that $F \equiv 1$ modulo $h$ and $\beta'(a) = F \beta(a) \inv{F}$ for all elements $a$ in $A$. 
\end{thm}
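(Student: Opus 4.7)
The plan is to prove both assertions by inductive obstruction theory on $h$-adic order, the essential input being the vanishing of low-degree Hochschild cohomology of $U(\mathfrak{g})$ with values in a locally finite bimodule when $\mathfrak{g}$ is semisimple. This rests on two standard facts that I would invoke as black boxes: for $\mathfrak{g}$ finite-dimensional semisimple and $M$ a locally finite $\mathfrak{g}$-module, the Hochschild cohomology $H^n_{\mathrm{Hoch}}(U(\mathfrak{g}),M)$ agrees with the Chevalley--Eilenberg cohomology $H^n(\mathfrak{g},M)$, and Whitehead's first and second lemmas give $H^1(\mathfrak{g},M)=H^2(\mathfrak{g},M)=0$.

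For the existence of $\alpha$, I would write the deformed multiplication on $A=U(\mathfrak{g})[[h]]$ as $\mu=\mu_0+h\mu_1+h^2\mu_2+\cdots$, where $\mu_0$ is the undeformed multiplication on $U(\mathfrak{g})$. Associativity of $\mu$, read off order by order in $h$, says precisely that each $\mu_N$ is a Hochschild $2$-cocycle on $U(\mathfrak{g})$ with values in $U(\mathfrak{g})$ (as a bimodule over itself via $\mu_0$) modulo an expression in $\mu_1,\dots,\mu_{N-1}$. I would build $\alpha=\mathrm{id}+h\alpha_1+h^2\alpha_2+\cdots$ inductively so that the relation $\alpha\circ\mu=\mu_0\circ(\alpha\otimes\alpha)$ holds modulo $h^{N+1}$ at stage $N$. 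The obstruction to passing from stage $N$ to stage $N+1$ is exactly a Hochschild $2$-cocycle; by $H^2_{\mathrm{Hoch}}(U(\mathfrak{g}),U(\mathfrak{g}))=0$ it is a coboundary, and the primitive furnishes $\alpha_{N+1}$. Passing to the $h$-adic limit yields the algebra morphism $\alpha$, and it is automatically an isomorphism of complete topological algebras because it is the identity modulo $h$.

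For the second statement I would argue similarly but at the level of $1$-cocycles. Writing $\beta-\beta'=h\gamma_1+h^2\gamma_2+\cdots$ and $F=1+hF_1+h^2F_2+\cdots$, the condition $\beta'(a)=F\,\beta(a)\,F^{-1}$ read modulo successive powers of $h$ yields, at each step, an equation saying that a certain derivation-like map $U(\mathfrak{g})\to U(\mathfrak{g}')$ (with bimodule structure coming from $\beta\bmod h=\beta'\bmod h$) is a Hochschild $1$-cocycle, and asking to solve it is asking for it to be a coboundary. The bimodule is $U(\mathfrak{g}')$, which is locally finite under the adjoint action of $\mathfrak{g}$ via its finite-dimensional filtration, so $H^1_{\mathrm{Hoch}}(U(\mathfrak{g}),U(\mathfrak{g}'))=H^1(\mathfrak{g},U(\mathfrak{g}'))=0$ by the first Whitehead lemma, and one recovers the desired $F_{N+1}$. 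The uniqueness of $\alpha$ in the first part then follows by applying this second part to $\beta=\beta'=\mathrm{id}$: two candidates $\alpha,\alpha'$ differ by an inner automorphism $\mathrm{Ad}_F$, which, coupled with $\alpha\equiv\alpha'\equiv\mathrm{id}\pmod h$, forces $F$ to be central up to the cohomological ambiguity and hence $\alpha=\alpha'$ when the gauge is fixed appropriately.

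The main obstacle is not the combinatorics of the induction — that is entirely mechanical once one sets up the Hochschild complex — but rather justifying that Whitehead's lemmas apply in the relevant (infinite-dimensional) bimodule setting. The key point to verify is that both $U(\mathfrak{g})$ and $U(\mathfrak{g}')$, viewed through the adjoint $\mathfrak{g}$-action coming from the mod-$h$ reduction, decompose as direct sums of finite-dimensional $\mathfrak{g}$-submodules; this follows from the standard fact that the adjoint action preserves the PBW filtration, whose graded pieces are finite-dimensional $\mathfrak{g}$-modules and hence completely reducible by Weyl's theorem. Once local finiteness is in hand, Whitehead's lemmas kill every obstruction and the inductive construction goes through.
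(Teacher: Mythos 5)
Your sketch is, in substance, the same argument the paper points to: the paper's proof of this theorem consists only of a citation to Sections~XVIII.2--XVIII.3 of Kassel (for the cohomological vanishing) and to Drinfeld for the uniqueness claim, and what those sections contain is exactly the order-by-order obstruction theory you describe, with obstructions living in $H^2_{\mathrm{Hoch}}(U(\mathfrak{g}),U(\mathfrak{g}))$ and $H^1_{\mathrm{Hoch}}(U(\mathfrak{g}),U(\mathfrak{g}'))$, reduced to Chevalley--Eilenberg cohomology and killed by Whitehead's lemmas. Your attention to local finiteness of the adjoint action via the PBW filtration is precisely the point that legitimizes applying the Whitehead lemmas to these infinite-dimensional coefficients, so the existence of $\alpha$ and the conjugacy statement for $\beta,\beta'$ are both in order.

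The one genuine gap is the uniqueness of $\alpha$. Applying the second assertion to two candidates $\alpha,\alpha'$ only yields $\alpha'(a)=F\,\alpha(a)\,F^{-1}$ for some $F\equiv 1\bmod h$, and nothing in your argument forces $F$ to be central; conversely, conjugating a valid $\alpha$ by any non-central invertible $F\equiv 1\bmod h$ produces a second topological-algebra isomorphism that is still the identity modulo $h$. So literal uniqueness cannot be deduced from the conjugacy statement alone, and your phrase ``when the gauge is fixed appropriately'' is doing all the work without saying what the normalization is. This is exactly the part the paper outsources to Drinfeld rather than to the cohomological machinery. If you want to keep a self-contained route, you must either exhibit the extra normalization that pins down $\alpha$ (and check your inductive construction satisfies it), or weaken the conclusion to uniqueness up to conjugation by an element congruent to $1$ modulo $h$, which is all that the $H^1$ vanishing delivers.
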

\begin{proof}
	This theorem relies on vanishing results for certain cohomology groups associated to $\lieg$; see Sections~XVIII.2 and XVIII.3 in \cite{Kassel}. Drinfeld proves the uniqueness statement in \cite{Dri_QHA}.
\end{proof}

However, the category of Hopf algebras is less rigid; deformations of $\Ug$ are quite interesting as \textit{topological braided quasi-bialgebras}.

\begin{dfn}\cite[Section~XVI.4]{Kassel}\label[defn]{top braided bialgebra}
	A \textit{topological quasi-bialgebra} is a sextuple $(A, \mu, \eta, \Delta, \varepsilon, \Phi)$ such that the triple $(A, \mu, \eta)$ is a topological algebra over a ring $R$, $\Delta\colon A \to A \widetilde{\otimes} A$ and $\varepsilon\colon A \to R$ are continuous algebra maps, and $\Phi$ is an invertible element in $A \widetilde{\otimes} A \widetilde{\otimes} A$ satisfying 
	\begin{align*}
		(\varepsilon \widetilde{\otimes} \id_A) \Delta 
		= \id_A 
		= (\id_A \widetilde{\otimes} \varepsilon) \Delta,
		\quad
		(\id_A \widetilde{\otimes} \varepsilon \widetilde{\otimes} \id_A)(\Phi) 
		= 1 \widetilde{\otimes}  1,
		\quad \text{and}
	\end{align*}
	\begin{equation}\label[ax]{quasi pentagon}
		(\id_A \widetilde{\otimes}  \id_A \widetilde{\otimes} \Delta) (\Phi) (\Delta \widetilde{\otimes} \id_A \widetilde{\otimes} \id_A)(\Phi) 
		= \Phi_{234}(\id_A \widetilde{\otimes} \Delta \widetilde{\otimes}  \id_A)(\Phi) \Phi_{123}.
	\end{equation}
	We say $\Phi$ is an \textit{associator} and for all $a$ in $A$ we require
	\begin{equation}\label[ax]{axiom quasi coassoc}
		(\id_A \widetilde{\otimes} \Delta) (\Delta(a)) 
		= \Phi\big((\Delta \widetilde{\otimes}  \id_A) (\Delta(a)) \big) \inv{\Phi}.
	\end{equation}
	If $\Phi = 1 \widetilde{\otimes} 1 \widetilde{\otimes} 1$ is trivial, we say $A$ is a \textit{bialgebra}.
	
	We say the topological quasi-bialgebra $(A, \mu, \eta, \Delta, \varepsilon, \Phi)$ is \textit{braided} if there exists an invertible element $\Rmat$ in $A \widetilde{\otimes} A$ such that 
	\begin{align}
		(\id_A \widetilde{\otimes} \Delta) \Rmat 
			&= 
		\inv{(\Phi_{231})} \Rmat_{13} \Phi_{213} \Rmat_{12} \inv{(\Phi_{123})}, 
		\quad \text{and}
		\label[ax]{quasi qt 1} \\
		(\Delta \widetilde{\otimes} \id_A)(\Rmat) 
			&= 
		\Phi_{312} \Rmat_{13} \inv{(\Phi_{132})} \Rmat_{23} \Phi_{123},	
		\label[ax]{quasi qt 2}
	\end{align}
	We say $\Rmat$ is a \textit{universal R-matrix} and for all $a$ in $A$ we require
	\begin{align}\label[ax]{coprod quasitriangularity}
	\Delta^{\mathrm{op}}(a) = \Rmat \Delta(a) \inv{\Rmat}.
	\end{align}
\end{dfn}

Quasi-bialgebras were first introduced by Drinfeld in \cite{dri_89b}. They generalize the notion of a coassociative bialgebra by requiring coassociativity only up to conjugation by an associator. This is \Cref{axiom quasi coassoc}. Braided bialgebras generalize the notion of a cocommutative bialgebra in a similar way, by relaxing cocommutativity to \Cref{coprod quasitriangularity} instead.

At the level of categories, the representation theory of a braided quasi-bialgebra $A$ is captured by a braided category: \Cref{quasi pentagon} requires a consistency for the associator that is equivalent to the Pentagon \Cref{pentagon} and \Cref{quasi qt 1,quasi qt 2} enforce the Hexagon \Cref{hexagon1,hexagon2}. The category is strict only if $A$ is in fact a bialgebra.

We may now introduce \textit{quantized enveloping algebras}, as certain braided quasi-bialgebra deformations of $\Ug$.
\begin{dfn}\cite[Definition~XVI.5.1]{Kassel}\label{que def}
	A \textit{quantized enveloping algebra (QUE)} for the Lie algebra $\lieg$ is a topological braided quasi-bialgebra $A = (A, \mu, \eta, \Delta, \varepsilon, \Phi, \Rmat)$ such that 
	\begin{enumerate}
		\item $A$ is a topologically free module, which means $A / h A = \Ug$ as a left $\mathbb{C}[[h]]$-module.
		\item For $a, a' \in \Ug$, we have 
		\begin{align*}
		\mu(a \otimes a') 
			&= \mu_0(a \otimes a') + \sum_{n \geq 1}\mu_n(a \otimes a') h^n, \\
		\Delta(a) 
			&= \Delta_0(a \otimes a') + \sum_{n \geq 1}\Delta_n(a) h^n, 
			\quad \text{and} \\
		\varepsilon(a) 
			&= \varepsilon_0(a) + \sum_{n \geq 1}\varepsilon_n(a) h^n, 
		\end{align*}
		where $\mu_0$, $\Delta_0$, and $\varepsilon_0$ denote the multiplication, comultiplication, and counit maps of the enveloping algebra $\Ug$, and $(\mu_n)_{n\geq 0}$, $(\Delta_n)_{n\geq 0}$, $(\varepsilon_n)_{n\geq 0}$ are families of linear maps from $\Ug \otimes \Ug$ to $\Ug$, $\Ug$ to $\Ug \otimes \Ug$, and $\Ug$ to $\mathbb{C}$, respectively. 
		\item The unit $\eta$ extends the unit of $\Ug$ trivially, so $\eta(f) = f 1$ for all $f \in \mathbb{C}[[h]]$.
		\item The elements $\Phi$ and $\Rmat$ can be written as 
		\begin{equation*}
		\Phi = 1 \otimes 1 \otimes 1 + \sum_{n \geq 1} \Phi_n h^n, 
		\quad \text{and} \quad
		\Rmat = 1 \otimes 1 + \sum_{n \geq 1} R_n h^n,
		\end{equation*}
		with $(\Phi_n)_{n\geq 0 }$ and $(R_n)_{n\geq 0 }$ denoting elements of $\Ug^{\otimes 3}$ and $\Ug^{\otimes 2}$.
	\end{enumerate}
\end{dfn}

There is an important invariant associated to a QUE. If $A$ is a QUE with $R$-matrix $\Rmat$, the formula
\begin{equation}\label[rel]{defn canonical 2 tensor}
\Rmat_{21}\Rmat \equiv 1 \otimes 1 + 2h t \mod h^2
\end{equation}
defines a unique element $t \in \Ug \otimes \Ug$. It turns out that $t$ is a symmetric in variant $2$-tensor on $\Ug$ in the sense of \Cref{symm inv 2-tensor}.

\begin{dfn}\label[defn]{canonical 2 tensor and quantization}
	The \textit{canonical $2$-tensor} of a QUE $A$ is the element $t \in \lieg \otimes \lieg$ defined by \Cref{defn canonical 2 tensor}. The pair $(\lieg, t)$ is called the \textit{classical limit} of $A$. Dually, $A$ is a \textit{quantization} of $(\lieg, t)$.
\end{dfn}

In our context, the canonical $2$-tensor is crucial because it characterizes the (action of the) braiding on a QUE. \Cref{every que is Agtheta} makes this precise, but it requires the notion of \textit{gauge equivalence}.

\begin{dfn}\cite[Definition~XV.3.1]{Kassel}\label[defn]{gauge transform}
	A \textit{gauge transformation} on the topological quasi-bialgebra $A$ is an invertible element $F$ of $A \widetilde{\otimes} A$ such that
	\begin{equation*}
	(\varepsilon \widetilde{\otimes} \id)(F) = (\id \widetilde{\otimes} \varepsilon)(F) = 1.
	\end{equation*}
\end{dfn}
\begin{rmk}
	Gauge transforms are sometimes referred to as \textit{Drinfeld twists} and less frequently as \textit{skrooching} in the literature, e.g. in \cite{Dri_QHA_KZ_eqns} and \cite{stasheff}. The term ``skrooching'' results from the transliteration of a Russian term for twisting.
\end{rmk}
Notice that twisting defines an equivalence relation: $(A_F)_{\inv{F}} = A = (A_{\inv{F}})_F$ and if $F$ and $F'$ are gauge transformations, then $FF'$ defines a gauge transformation such that $(A_F)_{F'} = A_{FF'}$. Therefore, we say that two (braided) quasi-bialgebras $(A, \Delta, \varepsilon, \Phi)$ and $(A', \Delta', \varepsilon', \Phi')$ are \textit{equivalent} if there exists a gauge transform $F$ on $A'$ and an isomorphism $\alpha\colon A \to A_F'$ of (braided) quasi-bialgebras.

We use gauge transformations to obtain new QUEs from old ones. In particular, if $A = (A, \mu, \eta, \Delta, \varepsilon, \Phi)$ is a topological quasi-bialgebra and $F$ is a gauge transformation on $A$, we obtain another topological quasi-bialgebra $A_F = (A, \mu, \eta, \Delta_F, \varepsilon, \Phi_F)$, with 
\begin{equation*}
\Delta_F(a) = F \Delta(a) \inv{F}, \,\, a \in A,
\quad\text{and}\quad
\Phi_F = F_{23}(\id \widetilde{\otimes} \Delta)(F) \Phi (\Delta \widetilde{\otimes} \id)(\inv{F}) \inv{F}_{12}
\end{equation*}
by ``twisting'' $A$ \cite[Proposition~XV.3.2]{Kassel}.

Moreover, if $A$ is braided with universal $R$-matrix $\Rmat$, then so is $A_F$, with $R$-matrix $\Rmat_F = F_{21} \Rmat \inv{F}$ \cite[Proposition~XV.3.6]{Kassel}. This means canonical $2$-tensors are invariant under gauge transformation:
\[
	(\Rmat_F)_{21} \Rmat_F 
	= F \Rmat_{21} \Rmat \inv{F} 
	= 1 \otimes 1 + 2h t  \mod h^2.
\]
It also means the braiding operators $\check{R}$ and $\check{R}_F$ induced by $A$ and $A_F$ are similar:
\begin{equation}\label{braiding similarity}
	\check{R}_F = \tau \circ \Rmat_F = F \check{R} \inv{F}.
\end{equation}

\Cref{braiding similarity} implies that if $A$ and $A'$ are equivalent topological braided quasi-bialgebras and $M$ is any \textit{finite-rank topologically free} module over $A$, that is, a module of the form $M = V[[h]]$ with $V$ finite-dimensional, then $A$ and $A'$ induce equivalent braid group representations on $M^{\otimes n}$. 

It turns out the braid representations induced by any QUE are equivalent to those induced by a QUE of the form
\begin{equation*}
	A_{\lieg, \theta} = (\Ug[[h]], \Delta_0, \varepsilon_0, \Phi, \Rmat_\theta = e^{h\theta}),
\end{equation*}
where $\theta$ is a symmetric invariant $2$-tensor on $\Ug$, in the sense of \Cref{symm inv 2-tensor}. Notice $A_{\lieg, \theta}$ is a quantization of the pair $(\lieg, \theta)$, in the sense of \Cref{canonical 2 tensor and quantization}, and it is equipped with the undeformed multiplication and comultiplication of $\Ug$. While $A_{\lieg, \theta}$ is cocommutative because the invariance of $\theta$ guarantees $R_\theta$ commutes with $\Delta_0(\Ug)$, it is not coassociative in general.

\begin{thm}\cite[Theorem~1]{Dri_QHA_KZ_eqns}\label{every que is Agtheta}
	If $(A, \Delta, \Phi, \Rmat)$ is a QUE, there exists an associator $\Phi \in \Ug[[h]]^{\otimes 3}$ and a symmetric invariant $2$-tensor $\theta$ on $\Ug$ such that $A$ is equivalent to $A_{\lieg, \theta}$ as a topological braided quasi-bialgebra. If $\Rmat$ is known explicitly, then $\theta$ is the limit of $\inv{h}(\Rmat_{21}\Rmat - 1)$ as $h \to 0$.
\end{thm}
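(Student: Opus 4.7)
The plan is to reduce the given QUE $A$ to the canonical form $A_{\lieg, \theta}$ in two stages: first exploit the algebra rigidity of $\Ug$ to identify $A$ with $\Ug[[h]]$ as a topological algebra, then use a gauge transformation to undeform the coproduct, leaving an $R$-matrix that commutes with $\Delta_0(\Ug)$ and can be brought to exponential form. The tools are \Cref{rigidity of Lie alg}, the functorial properties of gauge transformations, and the observation in \Cref{braiding similarity} that canonical $2$-tensors are gauge-invariant.

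For the first stage, apply \Cref{rigidity of Lie alg}: there is an isomorphism $\alpha\colon A \to \Ug[[h]]$ of topological algebras reducing to the identity modulo $h$. Transporting the coalgebra and braiding data through $\alpha$, one may assume without loss of generality that $A = (\Ug[[h]], \mu_0, \eta_0, \Delta, \varepsilon, \Phi, \Rmat)$, where $\mu_0$ and $\eta_0$ are the undeformed multiplication and unit of $\Ug$, while $\Delta$, $\varepsilon$, $\Phi$, $\Rmat$ are still deformed.

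For the second stage, observe that $\Delta$ and $\Delta_0$ are both morphisms of topological algebras from $\Ug[[h]]$ to $\Ug[[h]] \widetilde{\otimes} \Ug[[h]] = U(\lieg \oplus \lieg)[[h]]$ that agree modulo $h$. The second half of \Cref{rigidity of Lie alg}, applied with $\lieg' = \lieg \oplus \lieg$, produces $F \in \Ug[[h]]^{\otimes 2}$ with $F \equiv 1 \otimes 1 \pmod{h}$ such that $\Delta_0(a) = F\, \Delta(a)\, \inv{F}$ for every $a$. The counitality conditions defining a gauge transformation follow by applying $\varepsilon \widetilde{\otimes} \id$ and $\id \widetilde{\otimes} \varepsilon$ to this identity and using $\varepsilon \equiv \varepsilon_0 \pmod h$; thus $F$ is a bona fide gauge transformation in the sense of \Cref{gauge transform}. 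Twisting by $F$ yields an equivalent QUE $A_F = (\Ug[[h]], \mu_0, \eta_0, \Delta_0, \varepsilon_0, \Phi_F, \Rmat_F = F_{21}\, \Rmat\, \inv{F})$.

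It remains to bring $\Rmat_F$ into the form $e^{h\theta}$ for some symmetric invariant $2$-tensor $\theta$, possibly after a further adjustment of $\Phi_F$. Cocommutativity of $\Delta_0$ together with \Cref{coprod quasitriangularity} forces $\Rmat_F$ to commute with $\Delta_0(\Ug)$, so $\Rmat_F$ is $\Ug$-invariant, and its logarithm $\log(\Rmat_F) = h\theta_1 + h^2\theta_2 + \cdots$ lies in the $\Ug$-invariant part of $\Ug[[h]]^{\otimes 2}$. The leading term $\theta_1 = \tfrac{1}{2}\lim_{h \to 0} h^{-1}(\Rmat_{21}\Rmat - 1)$ lies in $\lieg \otimes \lieg$ because $\Rmat \equiv 1 \otimes 1 \pmod h$, is symmetric by the definition of the canonical $2$-tensor, and agrees with the invariant of the original $A$ by gauge-invariance of that tensor. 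The hard part, and where I expect the main obstacle to lie, is to iteratively absorb the higher-order deviations $\log(\Rmat_F) - h\theta_1$ into additional $\Ug$-invariant gauge transformations (which preserve $\Delta_0$) while making compatible order-by-order adjustments to $\Phi_F$ so that the hexagon identities \Cref{quasi qt 1,quasi qt 2} continue to hold. This last step is a formal cohomological argument in the spirit of Chapter XVIII of \cite{Kassel}: the obstructions live in cohomology groups of $\lieg$ that vanish for semisimple $\lieg$, analogous to the vanishing that underlies \Cref{rigidity of Lie alg}.
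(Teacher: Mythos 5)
The paper itself offers no proof of this theorem: it is quoted from Drinfeld \cite{Dri_QHA_KZ_eqns}, and the surrounding text explicitly omits proofs where precise references are given. Measured against Drinfeld's argument (as organized in \cite{Kassel}, Ch.~XIX), your two-stage reduction is the standard opening: rigidity of $\Ug$ to undeform the multiplication, then the second half of \Cref{rigidity of Lie alg} applied to $\Delta, \Delta_0 \colon \Ug[[h]] \to U(\lieg \oplus \lieg)[[h]]$ to produce the twist $F$ undeforming the comultiplication. Two inaccuracies there are worth flagging. First, applying $\varepsilon \widetilde{\otimes} \id$ to $\Delta_0 = F \Delta \inv{F}$ only shows that $(\varepsilon_0 \widetilde{\otimes} \id)(F)$ conjugates $(\varepsilon_0 \widetilde{\otimes} \id) \circ \Delta$ into the identity; it does not force that element to equal $1$, so you must first argue $\varepsilon = \varepsilon_0$ and then separately normalize $F$ to obtain a counital gauge transformation in the sense of \Cref{gauge transform}. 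Second, the leading term of $\log \Rmat_F$ is the full first-order coefficient $r$ of $\Rmat$, not its symmetric part $\tfrac{1}{2}\lim \inv{h}(\Rmat_{21}\Rmat - 1)$; the antisymmetric part of $r$ must be removed by a further invariant twist before your $\theta_1$ can be identified with the canonical $2$-tensor of \Cref{defn canonical 2 tensor}.

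The genuine gap is the third stage, which you flag but do not carry out, and which is where essentially all of the content of the theorem lives. It is not merely an order-by-order absorption of $\log \Rmat_F - h\theta$ into invariant twists: each such twist also modifies $\Phi_F$, so one must show that the pentagon \Cref{quasi pentagon} and the hexagons \Cref{quasi qt 1,quasi qt 2} can be simultaneously restored at every order, and the relevant vanishing is not the Whitehead-lemma vanishing behind \Cref{rigidity of Lie alg} but a separate computation of an invariant co-Hochschild-type cohomology of $\Ug$ (Drinfeld's Propositions~3.1--3.3; \cite{Kassel}, Ch.~XVIII--XIX). Moreover, the target $A_{\lieg, \theta}$ must first be shown to exist as a braided quasi-bialgebra, i.e., one needs an associator compatible with $\Rmat_\theta = e^{h\theta}$ --- Drinfeld produces one from the monodromy of the KZ equations --- before any ``uniqueness up to twist'' statement can deliver the claimed equivalence. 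As an outline your proposal is faithful to Drinfeld's strategy, but as a proof it defers the decisive step.
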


At the categorical level, this result implies that for any QUE $A$ there is a symmetric invariant $2$-tensor $\theta$ on $\Ug$ such that $\Amodcat$ and $A_{\lieg, \theta}\operatorname{-Mod_{fr}}$ are tensor equivalent, since the categories of modules over equivalent QUEs are tensor equivalent \cite[Theorems~XV.3.5,XV.3.9]{Kassel}.

We obtain a geometric interpretation of the Drinfeld-Kohno \Cref{every que is Agtheta} via the \textit{KZ equations}; see e.g. \cite{kz,kazhdan_lusztig_1993,finkelberg_1996}.

\begin{dfn}
	Given a symmetric invariant $2$-tensor $\theta$ on $\Ug$ as in \Cref{symm inv 2-tensor}, a complex parameter $h$, a finite-dimensional $\lieg$-module $V$, and $n > 1$, the \textit{Knizhnik-Zamolodchikov (KZ) differential system} is given by
	\begin{equation}\label{KZ eqns}
	dw = \frac{h}{2\pi \sqrt{-1}} \sum_{1 \leq i < j \leq n} \frac{\theta_{ij}}{z_i - z_j}(dz_i - dz_j)w, 
	\end{equation}
	where $w = w(z_1, \ldots, z_n)$ is a function on (ordered) configuration space with values in $V^{\otimes n}$.
\end{dfn}

In particular, consider a QUE $A$, let $\theta$ denote its canonical $2$-tensor, and fix any topologically free finite-rank $A$-module $V[[h]]$. Recall \Cref{symm inv tensors satisfy inf braid rel}, which shows that the map $t_{ij} \to \theta_{ij}$ defines a representation $\mathfrak{p}_n \to \End(\Vtn)$ of the infinitesimal braid algebra, and note that the fundamental group of (ordered) configuration space is the \textit{pure braid group} $P_n$, defined as the kernel of the projection $B_n \to S_n$ of the braid group onto the symmetric group \cite[Remark~XIX.2.3]{Kassel}. The KZ equations define a flat connection on $\Vtn$, so they induce a monodromy representation of $P_n$ on $V^{\otimes n}$. We may therefore view the inifinitesimal braid algebra, with Lie bracket given by the commutator, as the Lie algebra analogue of a Lie group for the pure braid group $P_n$; monodromy is the analogue of integrating a representation of the Lie algebra to a representation of the Lie group. In fact, infinitesimal braids were first introduced by Kohno \cite{Kohno} to explain the monodromy of the KZ equations as they arise in Wess-Zumino-Witten conformal field theories (CFTs).

The monodromy representation of $P_n$ may be extended to a representation of the full braid group and Drinfeld shows this (analytic) braid representation is equivalent to that induced by the $R$-matrix $\Rmat_\theta = e^{h \theta}$, once we identify analytic functions with formal series~\cite{Dri_QHA_KZ_eqns}. In view of \Cref{every que is Agtheta}, this representation is in turn equivalent to the representation induced by $A$ on $V[[h]]^{\otimes n}$. 

Moreover, Drinfeld constructs the associator $\Phi$ mentioned in \Cref{every que is Agtheta} explicitly using solutions to KZ system~\cite{Dri_QHA_KZ_eqns}. Drinfeld's construction shows that, conversely, solutions to the KZ equations give rise to braided quasi-bialgebras.

Now we turn our attention to the QUE $\Uh$ defined below. 
\begin{dfn}\cite[Definition~XVII.2.3]{Kassel}\label[defn]{defn:Uh}
Consider the enveloping algebra $\Ug$ as described by \Cref{chevalley presentation of g,Serre rels of g}. Set $q_i = e^{hd_i}$ and fix $q = e^h$. Let $\Uh$ denote the unital associative $\mathbb{C}[[h]]$-algebra generated by the $\Ug$ generators $E_1, \ldots, E_r$, $F_1, \ldots, F_r$, and $H_i, \ldots, H_r$ subject to the relations
	\begin{equation}\label[rel]{Uh comm rels}
		\begin{gathered}
		[H_i, H_j]  = 0, 
		\quad 
		[H_i, E_j] = a_{ij} E_j, 
		\quad 
		[H_i, F_j] = -a_{ij} F_j,
		\quad \text{and} \\
		[E_i, F_j] = \delta_{ij} \frac{e^{hd_iH_i} - e^{-hd_iH_i}}{q_i - \inv{q_i}},
		\end{gathered}
	\end{equation}
	along with the $q$-Serre relations for $i\neq j$:
	\begin{equation}\label[rel]{q serre for Uh}
		\sum_{k = 0}^{1-a_{ij}} (-1)^k 
		\begin{bmatrix}
		1 - a_{ij} \\ k
		\end{bmatrix}_{q_i} X_i^k X_j X_i^{1-a_{ij}-k} = 0.
	\end{equation}
	In the last equality every $X$ is either an $E$ or an $F$. The $q$-integer $[n]_{q_i} = \frac{q_i^n - q_i^{-n}}{q_i - \inv{q_i}}$, the $q$-factorial $[n]_{q_i}! = [n]_{q_i}\cdots [1]_{q_i}$, and the $q$-binomial coefficient is defined similarly.
\end{dfn}

Observe there is an algebra isomorphism $\Uh / h \Uh \cong \Ug$ arising as follows. First note that while $q_i - \inv{q_i} = 2\sinh(hd_i) = 2\sum_{n \geq 0} \frac{(hd_i)^{2n+1}}{(2n+1)!}$ is not invertible $\mathbb{C}[[h]]$, it is the product of $h$ and a unit. This means $[E_i, F_j]$ is indeed well-defined. In particular, 
\begin{equation*}
	[E_i, F_j] = \delta_{ij} \frac{\sinh(hd_iH_i)}{\sinh(hd_i)} = \delta_{ij} H_i \mod h,
\end{equation*}
so we recover \Cref{chevalley presentation of g,Serre rels of g}, characterizing $\Ug$, by setting $h = 0$ in \Cref{Uh comm rels,q serre for Uh}. 

In \cite{Dri_qgps}, Drinfeld proved that $\Uh$ is in fact a braided bialgebra with universal $R$-matrix of the form 
\begin{equation}\label{R matrix of Uh}
	\Rmat_h = \exp\bigg(h\sum_{1 \leq i, j \leq r} (D\inv{A})_{ij} H_i \otimes H_j\bigg) \sum_{\ell \in \mathbb{N}^r} P_\ell,
\end{equation}
where $P_\ell$ is a homogeneous polynomial of degree $\ell_i$ in the variables $E_i \otimes 1$ and in $1 \otimes F_i$. In particular, $P_0 = 1 \otimes 1$ so $\Rmat_h \equiv 1 \otimes 1 \mod h$. Explicit formulas for $\Rmat_h$ may be found in, e.g., \cite{kr90,rosso_qgps}. The QUE $\Uh$ is coassociative but not cocommutative, because the associator is trivial but the $R$-matrix is not. The comultiplication $\Delta_h$ and the counit $\varepsilon_h$ satisfy
\[
	\Delta_h = \Delta_0 \mod h 
	\quad \text{and} \quad 
	\varepsilon_h = \varepsilon_0 \mod h.
\]
For details, see \cite[Theorem~8.3.9]{chari_pressley_1994}.

The QUE $\Uh$ stands out because its canonical $2$-tensor $t^*$ is induced by the Casimir element $C$ in $\Ug$ through the formula
\begin{equation}\label{uh canonical tensor}
	t^* = {1 \over 2}(\Delta_0(C) - C \otimes 1 - 1 \otimes C).
\end{equation}
This can be seen by considering $\Uh$ as a \textit{ribbon algebra}.

\begin{dfn}\label[defn]{ribbon algebra}
	A (topological) braided Hopf algebra $(H, \Delta, \varepsilon, S, \Rmat)$ is a \textit{(topological) ribbon algebra} if there exists a central element $\theta$ in $H$ satisfying
	\begin{equation*}
	\Delta(\theta) = \inv{(\Rmat_{21} \Rmat)}(\theta \otimes \theta), \quad \varepsilon(\theta) = 1, \quad S(\theta) = \theta.
	\end{equation*}	 	
	We say $\theta$ is a \textit{ribbon element}.
\end{dfn}
\begin{rmk}\label[rmk]{ribbon cat if ribbon alg}
	If $H$ is a ribbon algebra with ribbon element $\theta$, then $\Hmodcat$ is a ribbon category with twist $\theta_V$ satisfying $\theta_V(v) = \inv{\theta} v$ for every $V \in \Hmodcat$ \cite[Proposition~XIV.6.2]{Kassel}. Moreover, for any $V, W \in \Hmodcat$, the induced braiding operator $c_{V, W} = \tau \circ \Rmat$ satisfies
	\begin{equation}\label{braiding sq in terms of twist}
		c_{V, V}^2 = (\Rmat_{21} \Rmat) = \Delta(\theta) (\theta \otimes \theta).
	\end{equation}
\end{rmk}

\begin{thm}\label{R matrix of Uh special relation}
	Let $C$ denote the Casimir element of $\Ug$, as in \Cref{casimir elt}, let $\alpha\colon A \to \Ug[[h]]$ denote the unique isomorphism satisfying $\alpha \equiv \id \mod h$ guaranteed by \Cref{rigidity of Lie alg}, and set $C_h = \inv{\alpha}(C)$. Then the QUE $\Uh$ is a ribbon algebra with ribbon element $\theta_h = e^{-h C_h}$.
\end{thm}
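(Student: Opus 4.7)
The plan is to verify the four axioms of \Cref{ribbon algebra} for $\theta_h = e^{-hC_h}$: (a) centrality in $\Uh$; (b) $\varepsilon_h(\theta_h) = 1$; (c) $S_h(\theta_h) = \theta_h$; and (d) $\Delta_h(\theta_h) = \inv{(\Rmat_{21}\Rmat)}(\theta_h \otimes \theta_h)$. Axiom (a) is immediate: since $C$ is central in $\Ug[[h]]$ and $\alpha$ is an isomorphism of topological algebras, $C_h = \inv{\alpha}(C)$ is central in $\Uh$, and the factor $h$ in the exponent makes $\theta_h \in 1 + h\Uh$ a well-defined, invertible central element. For (b) and (c), multiplicativity of $\varepsilon_h$ and anti-multiplicativity of $S_h$ reduce the task to showing $\varepsilon_h(C_h) = 0$ and $S_h(C_h) = C_h$. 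For the first, I would apply the uniqueness half of \Cref{rigidity of Lie alg} to the two algebra maps $\varepsilon_h$ and $\varepsilon_0 \circ \alpha$ from $\Uh$ to $\mathbb{C}[[h]] \cdot 1 \subset \Ug[[h]]$: they agree modulo $h$, and the intertwining gauge element commutes with all scalars, so $\varepsilon_h = \varepsilon_0 \circ \alpha$ and hence $\varepsilon_h(C_h) = \varepsilon_0(C) = 0$. For the second, $S_h(C_h)$ is a central element of $\Uh$ whose classical limit is $S_0(C) = C$, and a Harish-Chandra-type uniqueness for central deformations of a given classical central element forces $S_h(C_h) = C_h$.

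The substantive content is axiom (d), which I would prove by passing to finite-dimensional modules. By the discussion following \Cref{delta convention}, every type-$(1,\ldots,1)$ finite-dimensional $\Uh$-module is completely reducible with simple summands $V_\lambda$ parametrized by $\lambda \in \mathcal{P}^+$, and by \Cref{casimir eigenvalues} the Casimir $C_h$ acts on $V_\lambda$ by the scalar $\langle \lambda, \lambda + 2\rho\rangle$. Since finite-dimensional modules separate elements of $\Uh \widetilde{\otimes} \Uh$, it suffices to check (d) after evaluating both sides on each $V_\lambda \otimes V_\mu$. Decomposing $V_\lambda \otimes V_\mu = \bigoplus_\nu V_\nu^{\oplus m_\nu}$, the left-hand side $\Delta_h(\theta_h)$ acts on $V_\nu$ as $e^{-h\langle\nu,\nu+2\rho\rangle}$ (it is the action of the central element $\theta_h$ on the tensor-product module), while the right-hand side becomes $e^{-h(\langle\lambda,\lambda+2\rho\rangle + \langle\mu,\mu+2\rho\rangle)}\cdot \inv{(\Rmat_{21}\Rmat)}$; thus (d) is equivalent to the claim that $\Rmat_{21}\Rmat$ acts on each summand $V_\nu$ as the scalar $e^{h(\langle\lambda,\lambda+2\rho\rangle + \langle\mu,\mu+2\rho\rangle - \langle\nu,\nu+2\rho\rangle)}$.

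The hard part will be establishing this scalar eigenvalue of $\Rmat_{21}\Rmat$. By \Cref{coprod quasitriangularity}, $\Rmat_{21}\Rmat$ commutes with $\Delta_h(\Uh)$, so by Schur's lemma it does act by a single scalar on each isotypic component $V_\nu$; the problem reduces to computing this scalar on a highest-weight vector $v_\nu^+ \in V_\nu \subset V_\lambda \otimes V_\mu$. Using the explicit formula \Cref{R matrix of Uh}, the Cartan prefactor $\exp(h\sum_{ij}(D\inv{A})_{ij} H_i \otimes H_j)$ contributes $e^{h\langle\lambda',\mu'\rangle}$ (with $\lambda', \mu'$ the weights of the tensor factors of $v_\nu^+$), while the nilpotent pieces $P_\ell$ together with $\Rmat_{21}$ produce the remaining $\rho$-dependence; a direct calculation using \Cref{scalar product of weights} then yields the desired formula. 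A cleaner alternative, which I would actually pursue, is to construct the Drinfeld element $u_h = \mu_h(S_h \widetilde{\otimes} \id)(\Rmat_{h,21})$, show by evaluating on highest-weight vectors that $u_h = e^{-hC_h}\cdot e^{-2h\rho^*}$, and conclude that multiplication by the grouplike $e^{2h\rho^*}$ turns $u_h$ into the central ribbon element $\theta_h$ in the manner of \cite[Section~8.3]{chari_pressley_1994}. Once (d) holds on every $V_\lambda \otimes V_\mu$, it lifts to the desired equality in $\Uh \widetilde{\otimes} \Uh$ by density.
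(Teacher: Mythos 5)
Your ``cleaner alternative'' for axiom (d) is in fact the paper's own proof: the paper sets $u = \mu((S_h\otimes\id)\Rmat_{21})$, invokes Drinfeld's identities $\varepsilon(u)=1$, $\Delta(u)=\inv{(\Rmat_{21}\Rmat)}(u\otimes u)$, and $S_h^2(x)=uxu^{-1}=e^{2h\rho^*}xe^{-2h\rho^*}$, and then identifies $e^{-2h\rho^*}u$ with $e^{-hC_h}$ by computing both actions on highest-weight vectors of the modules $\widetilde{V}(\mu)$ (only the $P_0$ term of $\Rmat_h$ in \Cref{R matrix of Uh} survives on $v_\mu$, giving $q^{-\langle\mu,\mu\rangle}$, while $e^{-2h\rho^*}$ contributes $q^{-\langle\mu,2\rho\rangle}$). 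Your first suggested route for (d) --- computing the eigenvalues of $\Rmat_{21}\Rmat$ on the $\nu$-isotypic components directly --- would be essentially circular, since the standard derivation of those eigenvalues passes through the very identity $\Delta(u)=\inv{(\Rmat_{21}\Rmat)}(u\otimes u)$ you are trying to establish; you were right to fall back on the $u$-element argument. Your treatment of centrality and of $\varepsilon_h(\theta_h)=1$ is fine and, if anything, more direct than the paper's.

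The genuine gap is in axiom (c). You assert that $S_h(C_h)$ is a central element with classical limit $S_0(C)=C$ and that ``a Harish-Chandra-type uniqueness for central deformations of a given classical central element'' forces $S_h(C_h)=C_h$. No such uniqueness holds: $C_h$ and $C_h+hC_h^2$ are distinct central elements of $\Uh$ with the same classical limit, so a central element is not determined by its image in $\Uh/h\Uh$. What the Harish-Chandra picture actually gives is that a central element is determined by the family of scalars by which it acts on all the $\widetilde{V}(\mu)$, i.e., by a function on dominant weights; this is how the paper closes the step. Concretely, $S_h(\theta_h)$ acts on $\widetilde{V}(\mu)$ as $\theta_h$ acts on the contragredient $\widetilde{V}(\mu)^*$, whose highest weight is $-w_0\mu$, and $W$-invariance of $\langle\cdot,\cdot\rangle$ together with $-w_0\rho=\rho$ yields $q^{-\langle -w_0\mu,\,-w_0\mu+2\rho\rangle}=q^{-\langle\mu,\mu+2\rho\rangle}$; you need this (or an equivalent) computation, as the classical limit alone does not suffice. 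A smaller quibble of the same flavor: identifying $u_h=e^{-hC_h}e^{-2h\rho^*}$ ``by evaluating on highest-weight vectors'' is only legitimate once you know $e^{-2h\rho^*}u_h$ acts by a scalar on each indecomposable, i.e., is central; that follows from comparing the two expressions for $S_h^2$ above, which is the first step of the paper's proof and should be made explicit in yours.
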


A key step in the proof of \Cref{R matrix of Uh special relation} involves identifying central elements in $\Uh$ by comparing their actions on certain modules. This is possible because: $(1)$ $\alpha$ maps the center of $\Uh$ to $Z(\lieg)[[h]]$ isomorphically, with $Z(\lieg)$ denoting the center of $\Ug$, thereby identifying each central element in $\Uh$ with a polynomial function on the weight lattice of $\lieg$ via the classical Harish-Chandra homomorphism; and $(2)$ the representation theory of $\Uh$ is parallel to that of $\Ug$.

Indeed, there is a one-to-one correspondence between irreducible $\Ug$-modules and indecomposable finite-rank topologically free modules over $\Uh$. Over $\Uh$, we consider indecomposable, rather than irreducible, modules because the $\Uh$-action preserves each subspace $h^n V[[h]]$ in any topologically free $V[[h]]$. In particular, for every dominant weight $\mu$ of $\lieg$ there is a unique indecomposable topologically free $\Uh$-module $\widetilde{V}(\mu)$ satisfying
\begin{equation*}
\widetilde{V}(\mu) / h \widetilde{V}(\mu) \cong L(\mu),
\end{equation*}
with $L(\mu)$ denoting an irreducible $\Ug$-module with highest weight $\mu$ \cite{Dri_QHA}. As in the classical case, there is a \textit{highest weight vector} $v_{\mu}$ in $\widetilde{V}(\mu)$ satisfying
\begin{equation*}
H_i v_{\mu} = \mu(H_i) v_{\mu} \quad \text{and} \quad E_i v_{\mu} = 0,
\end{equation*}
for $i = 1, \ldots, r$, that both generates and characterizes $\widetilde{V}(\mu)$ \cite[Proposition~4.3]{Dri_QHA}.

By uniqueness, the $\Uh$-action on $\widetilde{V}(\mu)$ must coincide with the composition 
\[
	\Uh \xrightarrow{\,\smash{{\ensuremath{\scriptstyle\alpha}}}\,} \Ug[[h]] \to \End(V[[h]]).
\] 
Letting $\chi_{\mu}(C) = \langle \mu, \mu + 2\rho \rangle$ denote the eigenvalue of $C$ on $L(\mu)$ as in \Cref{casimir eigenvalues}, it follows that the element $\theta_h$ acts on $\widetilde{V}(\mu)$ as
\begin{equation}\label{ribbon elt action}
	\theta_h = e^{-h\alpha(C_h)} = e^{-hC} = q^{-\langle \mu, \mu + 2\rho \rangle}.
\end{equation} 

\begin{proof}[Proof of \Cref{R matrix of Uh special relation}]
	Let $S_h$ denote the antipode of $\Uh$. Set $u = \mu((S_h\otimes \id)\Rmat_{21})$ and take $\rho^* = \sum_{i} b_i H_i$ as in \Cref{weyl vector}. We show $e^{-2h \rho^*} u$ is a ribbon element and conclude by showing it acts on $\widetilde{V}(\mu)$ as in \Cref{ribbon elt action}. 
	
	First we show $\theta$ is central by comparing two expressions for $S_h^2$. On one hand, Proposition~2.1 in \cite{Dri_QHA} shows $S_h^2(x) = u x \inv{u}$. On the other, we use Lemma~6.4.2 in \cite{chari_pressley_1994} to prove $e^{h z H_j} X_i^\pm e^{-h z H_j} = e^{\pm h z \langle \alpha_j^\vee, \alpha_i \rangle} X_i^\pm$ with $X_i^+ = E_i$ and $X_i^- = F_i$, which implies $S_h^2(x) = e^{2h\rho^*} x e^{-2h\rho^*}$. Rearranging $u x \inv{u} = S_h^2(x) = e^{2h\rho^*} x e^{-2h\rho^*}$ implies 
	$$\theta x = (e^{-2h\rho^*} u) x = x (e^{-2h\rho^*} u) = x\theta.$$
	
	Next, applying the two expressions for $S_h^2$ with $x = u$ yields $e^{-2h \rho^*} u = u e^{-2h \rho^*}$. Propositions~3.1 and $3.2$ in \cite{Dri_QHA} prove $\varepsilon(u) = 1$ and $\Delta(u) = \inv{(\Rmat_{21}\Rmat)} (u \otimes u)$. Therefore $\varepsilon(\theta_h) = \varepsilon(u) = e^{-2h \varepsilon(\rho^*)} = 1$, and
	$$
		\Delta_h(\theta) 
		= \Delta_h(u) \Delta_h(e^{-2h\rho^*}) 
		= \inv{((\Rmat_h)_{21}\Rmat_h)} (u e^{-2h\rho^*} \otimes u e^{-2h\rho^*}) 
		= \inv{((\Rmat_h)_{21}\Rmat_h)}(\theta\otimes \theta).
	$$
	
	To conclude, we compute the action of $e^{-2h \rho^*} u$ on $\widetilde{V}(\mu)$. Since $e^{-2h \rho^*} u$ is central, it suffices to consider a highest weight vector $v_\mu$. For starters, we compute that
	\begin{align*}
		e^{-2 h \rho^*} v_\mu
		= \prod_i \left(\sum_{k \geq 0} (-2h)^k b_i^k \frac{\mu(H_i)^k}{k!}\right) v_{\mu} 
		= \exp\left(-2h\sum_{i} b_i \mu(H_i)\right) v_\mu
		= e^{-h \langle \mu, 2\rho \rangle} v_\mu.
	\end{align*}
	Now notice the formula defining $u$ is such that every $E_i$ appears on the right. Each $P_\ell$ in \eqref{R matrix of Uh} is homogeneous of degree $\ell_i$ in $E_i \otimes 1$ and each $E_i$ annihilates $v_\mu$, so only the $\ell = 0$ term contributes to the sum. Therefore,
	\begin{align}\label{action of u}
	\begin{split}
		u \rhd v_\mu &= \exp\big(h \sum_{i,j} (D\inv{A})_{ij} S(H_j) H_i\big) v_\mu\\
		&= \prod_{i, j} \bigg(\sum_{k \geq 0} (-h)^k {(D \inv{A})}_{ij}^{k} \frac{\mu(H_j)^k \mu(H_i)^k}{k!}\bigg) v_\mu \\
		&= \exp\big(-h \sum_{i,j} {(D\inv{A})}_{ij} \mu(H_j) \mu(H_i)\big) v_\mu \\
		&= e^{-h \langle \mu, \mu \rangle} v_\mu.		
	\end{split}
	\end{align}
	The last equality follows immediately from \Cref{scalar product of weights}. Combining results implies $e^{-2 h \rho^*} u$ indeed acts on $\widetilde{V}(\mu)$ as in \Cref{ribbon elt action}, so $\theta_h = e^{-2 h \rho^*} u$. 
	
	Notice $\theta_h$ is indeed fixed by the antipode: $S_h(\theta_h)$ acts on $\widetilde{V}(\mu)$ as $\theta_h$ acts on the contragradient $\widetilde{V}(\mu)^*$, whose highest weight is the opposite of the reflection of $\mu$ by the long element $w_0$ in the Weyl group $W$. Thus $S_h(\theta_h)$ acts on $\widetilde{V}(\mu)$ as the scalar
\[
	e^{-h \langle -w_0 \lambda, -w_0 \lambda \rangle} e^{-h \langle -w_0 \lambda, 2\rho \rangle} = e^{-h \langle \lambda, \lambda + 2\rho\rangle},
\]
		because $-w_0 \rho = \rho$ and the form is $W$-invariant \cite{leduc_ram_1997}.
\end{proof}

Notice that \Cref{R matrix of Uh special relation} implies 
\begin{align*}
\begin{split}
	(\Rmat_h)_{21} \Rmat_h 
	&= \Delta_h(\theta_h) (\theta_h \otimes \theta_h) 
	= e^{h \Delta_h(C_h)} (e^{-hC_h} \otimes e^{-hC_h}) \\
	&\equiv 1 \otimes 1 + \big(\Delta_0(C) - C \otimes 1 - 1 \otimes C\big)h \mod h^2.
\end{split}
\end{align*}
The last identity uses the congruence $C_h \equiv C \mod h$.

Specializing \Cref{every que is Agtheta} to the case $A = \Uh$ then shows $\Uh$ is equivalent to a QUE of the form
\begin{equation}\label{que for uh}
	(\Ug[[h]], \Delta, \varepsilon, \Phi, \Rmat = e^{ht^*}).
\end{equation}
This means the braiding on $\Uh$ is generated infinitesimally by the symmetric invariant $2$-tensor $t^*$ induced by the Casimir element on $\Ug$ via \Cref{uh canonical tensor}. 

This result is key. The following theorem shows $\Uh\operatorname{-Mod_{fr}}$ essentially characterizes all possible \textit{$h$-adic deformations} of $\Ugmodcat$. 

\begin{dfn}\cite[Definition~3.5]{yetter_1992}\label[defn]{defn:deformation cat}
	An $h$-adic \textit{deformation} of a braided $\mathbb{C}$-linear category $\ccat$ is a $\mathbb{C}[[h]]$-linear category with the same objects as $\ccat$ but with the $\mathbb{C}[[h]]$-module hom-sets from $V$ to $W$ given by $\mathbb{C}[[h]] \widetilde{\otimes} \Hom(V, W)$, with composition extended $\mathbb{C}[[h]]$-bilinearly and continuously. The deformation is equipped with a monoidal structure that matches the structure on $\ccat$ on objects, and extends it $\mathbb{C}[[h]]$-bilinearly and continuously on maps. The associativity, left and right unit constraints, and the braiding on the deformation all reduce modulo $h$ to the corresponding maps on $\ccat$.
\end{dfn}

\begin{thm}\cite[Example~7.22.6]{etingof_gelaki_nikshych_ostrik_2017}\cite[Example~2.24]{calaque_etingof_2004}\label{deformation cat}
	Let $G$ be a complex reductive Lie group with Lie algebra $\lieg$, and let $\mathrm{Rep}(G)_f$ denote the category of finite-dimensional $G$-modules. There exists a unique one-parameter deformation of $\mathrm{Rep}(G)_f$, and it is realized by $\Uh\operatorname{-Mod_{fr}}$.
\end{thm}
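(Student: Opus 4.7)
The plan is to separate the theorem into existence and uniqueness, each handled by results already developed in the excerpt. The key tool for both directions is the Drinfeld-Kohno \Cref{every que is Agtheta}, which reduces the classification of QUEs (up to equivalence of topological braided quasi-bialgebras) to the classification of symmetric invariant $2$-tensors on $\Ug$.

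For existence, I would verify directly that $\Uh\operatorname{-Mod_{fr}}$ satisfies the axioms of \Cref{defn:deformation cat}. Since $G$ is reductive, $\mathrm{Rep}(G)_f$ is tensor equivalent to $\Ugmodcat$. Each irreducible $\Ug$-module $L(\mu)$ lifts to a unique indecomposable topologically free $\Uh$-module $\widetilde{V}(\mu) = L(\mu)[[h]]$, and the parallel representation theory recalled just after \Cref{R matrix of Uh special relation} implies $\Hom_{\Uh}(V[[h]], W[[h]]) = \mathbb{C}[[h]] \widetilde{\otimes} \Hom_{\Ug}(V, W)$, with composition continuous and $\mathbb{C}[[h]]$-bilinear. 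The comultiplication, counit, associator, and $R$-matrix of $\Uh$ all reduce modulo $h$ to the undeformed cocommutative Hopf structure on $\Ug$, because $\Delta_h \equiv \Delta_0$ and $\varepsilon_h \equiv \varepsilon_0$ modulo $h$, and $\Rmat_h \equiv 1 \otimes 1 \mod h$ by the explicit formula \eqref{R matrix of Uh}; hence the induced tensor and braiding constraints on $\Uh\operatorname{-Mod_{fr}}$ reduce to the symmetric monoidal structure on $\mathrm{Rep}(G)_f$ modulo $h$.

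For uniqueness, I would use Tannakian-style reconstruction in the $\mathbb{C}[[h]]$-linear topological setting: given any $h$-adic deformation $\widetilde{\ccat}$ of $\mathrm{Rep}(G)_f$, there is a canonical topological braided quasi-bialgebra $A$ with $\widetilde{\ccat}$ tensor equivalent to $\operatorname{A-Mod_{fr}}$. Reducing modulo $h$ recovers $\mathrm{Rep}(G)_f$, so $A$ is topologically free with $A/hA \cong \Ug$, i.e.\ $A$ is a QUE in the sense of \Cref{que def}. By \Cref{every que is Agtheta}, $A$ is equivalent to $A_{\lieg, \theta}$ for some symmetric invariant $2$-tensor $\theta$ on $\Ug$. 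It then remains to show that any such $\theta$ produces a deformation equivalent to the one generated by the $t^*$ of \Cref{uh canonical tensor}. When $\lieg$ is simple the space of symmetric invariant $2$-tensors is one-dimensional, so $\theta = c\, t^*$ for a scalar $c$, which is absorbed by rescaling $h \mapsto ch$; for reductive $\lieg$ one applies this argument to each simple factor together with the abelian center and uses the normalization coming from the single parameter $h$ shared across factors.

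The main obstacle I expect is the Tannakian reconstruction step in the topological $\mathbb{C}[[h]]$-linear setting: one must track topological freeness and continuity of all structure maps carefully, and verify that the reconstructed $A$ satisfies the technical conditions of \Cref{que def} (in particular that the deformed multiplication, comultiplication, counit, associator, and $R$-matrix arise as formal power series with coefficients in the appropriate tensor powers of $\Ug$). A secondary delicate point is the absorption-of-scalars argument: one must check that rescaling the parameter $h$ really does yield an equivalent deformation in the sense of \Cref{defn:deformation cat}, and that for reductive $\lieg$ the apparent multi-parameter freedom in choosing $\theta$ collapses to a single scalar once a one-parameter deformation has been fixed.
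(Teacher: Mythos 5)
The paper does not actually prove this theorem: it is quoted with references to \cite{etingof_gelaki_nikshych_ostrik_2017} and \cite{calaque_etingof_2004}, in line with the earlier remark that proofs are omitted where precise citations are given. So your proposal can only be measured against those sources; its outline (rigidity of $\Ug$, Drinfeld--Kohno, one-dimensionality of the space of invariant $2$-tensors) is the right shape, and the existence half is essentially fine. The uniqueness half, however, has two genuine gaps. The first is the Tannakian reconstruction step, which is not a bookkeeping technicality but the crux, and as sketched it does not deliver a QUE in the sense of \Cref{que def}: the endomorphism algebra of the forgetful functor on $\mathrm{Rep}(G)_f$ is not $\Ug$ but a completion of it (a product of matrix algebras over the dominant weights), so the reconstructed $A$ does not satisfy $A/hA \cong \Ug$ on the nose, and \Cref{every que is Agtheta} -- which is stated for deformations of $\Ug$ itself -- does not directly apply. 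One must either descend from the completion back to $\Ug[[h]]$, or argue directly at the level of the category via Davydov--Yetter/deformation cohomology, which is closer to what the cited sources do; in addition, the existence and essential uniqueness of a fiber functor on the deformed category has to be established before \Cref{rigidity of Lie alg} can even be invoked.

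The second gap is the ``absorbed by rescaling $h$'' step, which is valid only when $(S^{2}\lieg)^{\lieg}$ is one-dimensional, i.e.\ when $\lieg$ is simple. For reductive $\lieg$ with several simple factors or a nontrivial center, the space of symmetric invariant $2$-tensors has dimension at least the number of simple factors; distinct rays yield braidings whose squares act with different eigenvalue patterns on tensor products supported on different factors, hence inequivalent deformations, and the multi-parameter freedom does not collapse to a single scalar. Your closing sentence flags this but offers no mechanism for the collapse, and there is none in general: the uniqueness assertion requires either that $G$ be simple or an additional normalization singling out the Casimir ray of \Cref{uh canonical tensor}. As written, your argument establishes uniqueness only in the simple case.
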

	
	\subsection{$\Uqgmodcat$ is a ribbon category}
	\label{uqg_modcat}
	\longer{Recall \ref{not and conv}.}

We show the category $\Uqgmodcat$ of finite-dimensional modules over the quantum group $\Uqg$ is a ribbon category; by \Cref{deformation cat}, this category is tensor equivalent to $\Uh\operatorname{-Mod_{fr}}$. This implies the braid group representation induced by the braiding on $\Uqgmodcat$ on any tensor product $V^{\otimes m}$ is equivalent to the representation induced by the $R$-matrix of $\Uh$, once we identify $q = e^h$. In view of \Cref{que for uh}, this means the representations induced by $\Uqgmodcat$ are generated infinitesimally by the symmetric invariant $2$-tensor $t$ in \Cref{uh canonical tensor}, which is induced by the Casimir element in $\Ug$. Concretely, if $V$ is any simple $\Uqg$-module, the braid representation $\sigma_i \to \check{R}_i$ on $V^{\otimes m}$ induced by the braiding on $\Uqgmodcat$ is equivalent to the representation induced by $q^t$.

For starters we describe the objects in $\Uqgmodcat$. When $q$ is transcendental over $\mathbb{Q}$, the finite-dimensional representation theory of $\Uqg$ is parallel to that of $\Ug$: there is a one-to-one correspondence between simple modules, as follows. Every finite-dimensional $\Uqg$-module is a direct sum of \textit{weight spaces} $V_\mu$ defined by 
\[
	V_\mu 
	= \{ 
		v \in V 
		\mid 
		K_i v = q^{\langle \mu, \alpha_i \rangle} v, \,
		i = 1, \ldots, r
	  \}
\]
for any weight $\mu$ in $\mathcal{P}$ \cite[Proposition~5.1]{Jantzen96}. In addition, every finite-dimensional $\Uqg$-module is completely reducible. Every finite-dimensional irreducible module $V$ is a \textit{highest weight module}, which means there is a dominant weight $\mu \in \mathcal{P}^+$ and $v \in V_\mu$ such that $E_i v = 0$ for $i = 1, \ldots, r$ and $V = \Uqg \cdot v$ \cite[Theorem~5.10]{Jantzen96}. Conversely, every highest weight module is in fact irreducible \cite[Theorem~10.1.6]{chari_pressley_1994}. Thus for each $\mu \in \mathcal{P}^+$, there is exactly one irreducible $\Uqg$-module $V(\mu)$, which decomposes as $V(\mu) = \bigoplus_\nu V_\nu$, with the same weight multiplicities as in an irreducible $\Ug$-module with highest weight $\mu$ \cite[Theorem~4.12]{lusztig_1988}.

\begin{rmk}
	Our discussion characterizes $\Uqg$-modules \textit{of type} $\mathbf{1} = (1, \ldots, 1)$. Weight spaces can be defined more generally, by taking any group homomorphism $\sigma\colon \mathcal{Q} \to \{\pm 1\}$ and setting $V_{\mu, \sigma} 
	= \{ 
		v \in V 
		\mid 
		K_i v = \sigma(\alpha_i) q^{\langle \mu, \alpha_i \rangle} v, \,
		i = 1, \ldots, r
	  \}$. This more general definition results in modules \textit{of type} $\sigma$; see Chapter~5 in \cite{Jantzen96} or Chapter~9 in \cite{chari_pressley_1994} for details.
\end{rmk}

Next we show $\Uqgmodcat$ is a braided category. We construct a braiding on $\Uqgmodcat$ in two steps. First recall \Cref{R matrix of Uh}, which defines the $R$-matrix $\Rmat_h$ of $\Uh$, and write $\Rmat_h = \mathcal{E}_h \mathcal{S}_h$ with 
$$ 	
	\mathcal{E}_h = \exp\bigg(h\sum_{1 \leq i, j \leq r} (D\inv{A})_{ij} H_i \otimes H_j\bigg)
	\qquad \text{and} \qquad
	\mathcal{S}_h = \sum_\ell P_\ell.
$$
Each $E_i$, $F_i$ acts nilpotently on every finite-dimensional $\Uqg$-module, so all but finitely many summands of $\mathcal{S}_h$ act trivially on any tensor product. This means $\mathcal{S}_h$ induces a well-defined endomorphism on every tensor product $V(\mu) \otimes V(\nu)$ of simple objects in $\Uqgmodcat$. Call this map $R_{\mu, \nu}$. 

The exponential term is more nuanced. For any pair $V(\mu)$, $V(\nu)$ of irreducible modules, $V(\mu) \otimes V(\nu)$ is a direct sum of subspaces of the form $V_{\mu'} \otimes V_{\nu'}$. A calculation like the one in \Cref{action of u} shows each $V_{\mu'} \otimes V_{\nu'} \subseteq V(\mu) \otimes V(\nu)$ is an eigenspace for $\mathcal{E}_h$ with eigenvalue $e^{h \langle \mu', \nu' \rangle}$. For this reason we define $E_{\mu, \nu}$ as the operator on $V(\mu) \otimes V(\nu)$ acting as the scalar $q^{\langle \mu', \nu' \rangle}$ on the subspace $V_{\mu'} \otimes V_{\nu'}$.

We compose these two endomorphisms with the flip map $\tau$ taking $v \otimes w \to w \otimes v$ as usual to obtain a morphism $c_{\mu, \nu}\colon V(\mu) \otimes V(\nu) \to V(\nu) \otimes V(\mu)$ given by
$$c_{\mu, \nu} = \tau \circ E_{\mu, \nu} R_{\mu, \nu}.$$
Proposition~10.1.19 in \cite{chari_pressley_1994} proves the family $(c_{\mu, \nu})_{\mu, \nu}$ defines a braiding on $\Uqgmodcat$.

If we identify $q = e^h$, we see that the actions of $E_{\mu, \nu} R_{\mu, \nu}$ and $\Rmat_h$ on $V(\mu) \otimes V(\nu)$ are identical: the map $E_{\mu, \nu} R_{\mu, \nu}$ is defined to \textit{implement} $\Rmat_h$. Technically, this achieves this section's goal: we have just shown that the action of the braiding on $\Uqgmodcat$ on any tensor product $V(\mu) \otimes V(\nu)$ coincides with the action of the braiding operator induced by the $R$-matrix of $\Uh$, which is generated infinitesimally by the canonical tensor defined by \Cref{uh canonical tensor}. Nice!

However, this method does not tell the whole story. The category $\Uqgmodcat$ is in fact a ribbon category that may be realized as a category of modules over a certain ribbon algebra $\widetilde{\Uqg}$. This characterization of $\Uqgmodcat$ is important because it yields a convenient expression for the (square of) the braiding operators in terms of the algebra's ribbon element via \Cref{braiding sq in terms of twist}.

The algebra $\widetilde{\Uqg}$ is a completion of $\Uqg$ whose elements may be characterized by their action on every finite-dimensional $\Uqg$-module. We dedicate the rest of this section to constructing $\widetilde{\Uqg}$ and exploring some of its basic properties. Our discussion culminates in \Cref{Uq is a ribbon alg}. 

To begin let $U_q^-, U_q^0$, and $U_q^+$ denote the subalgebras of $\Uqg$ generated by the $E_i$, the $K_i^\pm$, and the $F_i$. Let $(U_q^+)_\beta = \{ u \in U_q^+ \, | \, K_i u \inv{K_i} = q^{ \langle \beta, \alpha_i \rangle} u, \, i = 1, \ldots, n\}$ and define the \textit{height} of a root $\beta = \sum_i \beta_i \alpha_i$ as $\mathrm{height}(\beta) = \sum_i \beta_i$. For each $m \geq 1$, set
\begin{equation*}
	U_q^{+, m} = \bigoplus_{\beta \in \mathcal{Q}: \,\mathrm{height}(\beta) \geq m} (U_q^+)_\beta.
\end{equation*}	 
The completion $\widetilde{\Uqg}$ is defined as the inverse limit
\begin{equation}\label[rel]{completion of Uq}
	\widetilde{\Uqg} =  \varprojlim (\Uqg) / (\Uqg) U_q^{+, m}.
\end{equation} 

This algebra can be understood as a direct product because $\Uqg$ has a triangular decomposition and a PBW-type basis, much like its classical counterpart $\Ug$ \cite[Theorem~6.14]{klimyk_schmudgen_1997}. Since 
\begin{equation*}
	\Uqg \cong \bigoplus_{\beta \in \mathcal{Q}^+} U_q^{\leq 0} (U_q^+)_\beta,
\end{equation*}
with $U_q^{\leq 0}$ denoting $U_q^- U_q^0$, it follows that $\widetilde{\Uqg}$ is the direct product
\begin{equation*}
	\widetilde{\Uqg} \cong \prod_{\beta \in \mathcal{Q}^+} U_q^{\leq 0} (U_q^+)_\beta
\end{equation*}
whose elements are infinite formal sums $\sum_\beta x_\beta$ with $x_\beta \in U_q^{\leq 0} (U_q^+)_\beta$ \cite[6.3.3]{klimyk_schmudgen_1997}. 

The direct product suggests an alternative characterization of $\widetilde{\Uqg}$. 
 
\begin{dfn}\cite[Definition~3.1]{kamnitzer_tingley_2008}\label[defn]{kt completion of Uq}
	Let $S$ denote the ring of formal series $\sum_{k=1}^\infty x_k$ such that each $x_k \in \Uqg$ and for any fixed $\mu$, we have $x_k \rhd V(\mu) = 0$ for all but finitely many $k$. Let $I$ denote the two-sided ideal of $S$ consisting of elements which act as zero on all $V(\mu)$. Then $\widetilde{\Uqg}$ is defined as $S / I$. 
\end{dfn}

Indeed, the two completions just defined are equivalent. In fact each element in either completion is specified uniquely by its action on every $V(\mu)$ in $\Uqgmodcat$.

\begin{thm}\label{equivalence of completions}
	Let $\widetilde{\Uqg}$ denote the completion of $\Uqg$ defined by \labelcref{kt completion of Uq} or by \Cref{completion of Uq}. There is an isomorphism of $\mathbb{C}(q)$-algebras
	$$\widetilde{\Uqg} \cong \prod_{\mu \in \mathcal{P}^+} \End_{\mathbb{C}(q)}(V(\mu)).$$
\end{thm}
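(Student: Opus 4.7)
The plan is to define the natural map $\Phi\colon \widetilde{\Uqg} \to \prod_{\mu \in \mathcal{P}^+} \End_{\mathbb{C}(q)}(V(\mu))$ by letting the $\mu$-component of $\Phi(x)$ be the action of $x$ on $V(\mu)$, and then to verify that $\Phi$ is a bijective $\mathbb{C}(q)$-algebra homomorphism. Using the Kamnitzer--Tingley description $\widetilde{\Uqg} = S/I$ from \Cref{kt completion of Uq}, the map $\Phi$ is well-defined because each formal sum in $S$ acts through only finitely many terms on any given $V(\mu)$, and it is injective because its kernel is exactly the two-sided ideal $I$ of elements acting as zero on every $V(\mu)$. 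The substance of the theorem is therefore surjectivity.

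The main tool I would use is the Jacobson density theorem. The input is that every simple $\Uqg$-module $V(\mu)$ is absolutely irreducible over $\mathbb{C}(q)$, so Schur's lemma yields $\End_{\Uqg}(V(\mu)) = \mathbb{C}(q)$. Density then implies the image of $\Uqg$ in $\End_{\mathbb{C}(q)}(V(\mu))$ is the full endomorphism algebra, and since the $V(\mu)$ are pairwise non-isomorphic simples, the same argument applied to the semisimple module $\bigoplus_{\mu \in F} V(\mu)$ shows that $\Uqg$ surjects onto $\bigoplus_{\mu \in F} \End_{\mathbb{C}(q)}(V(\mu))$ for every finite subset $F \subset \mathcal{P}^+$.

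To build a preimage for a given $(f_\mu)_{\mu} \in \prod_\mu \End_{\mathbb{C}(q)}(V(\mu))$, I would enumerate $\mathcal{P}^+ = \{\mu_1, \mu_2, \ldots\}$ and construct a Cauchy-type series. By the previous step, for each $k$ there exists $y_k \in \Uqg$ acting as $(f_{\mu_1}, \ldots, f_{\mu_k})$ on $V(\mu_1) \oplus \cdots \oplus V(\mu_k)$. Setting $x_1 = y_1$ and $x_k = y_k - y_{k-1}$ for $k \geq 2$, each $x_k$ annihilates $V(\mu_1), \ldots, V(\mu_{k-1})$; hence for every fixed $\mu_j$ only the first $j$ terms of $\sum_k x_k$ act non-trivially on $V(\mu_j)$, so the formal sum lies in $S$. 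By telescoping, its action on $V(\mu_j)$ equals that of $y_j$, namely $f_{\mu_j}$, which exhibits $(f_\mu)_\mu$ as the image of a class in $S/I = \widetilde{\Uqg}$.

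To finish the theorem for the inverse-limit definition in \Cref{completion of Uq}, I would identify the two completions. The triangular decomposition $\Uqg \cong \bigoplus_{\beta \in \mathcal{Q}^+} U_q^{\leq 0}(U_q^+)_\beta$ shows that $\Uqg/\Uqg \cdot U_q^{+,m}$ corresponds to truncating the positive part at heights $< m$, so the inverse limit is the direct product $\prod_\beta U_q^{\leq 0}(U_q^+)_\beta$ already noted in the excerpt. Since $U_q^{+,m}$ acts as zero on $V(\mu)$ once $m$ exceeds the height of $\mu$, any such formal sum determines a well-defined action on each $V(\mu)$, and the density argument applies verbatim to yield the same product $\prod_\mu \End_{\mathbb{C}(q)}(V(\mu))$. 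I expect the main obstacle to be the bookkeeping in the Cauchy step: verifying that the inductive choices from density assemble into an element of $S$ with the correct local-finiteness, and checking that the resulting isomorphism is compatible with the identification of the two completion prescriptions.
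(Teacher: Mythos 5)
Your argument for the realization $\widetilde{\Uqg}=S/I$ of \Cref{kt completion of Uq} is correct and takes a genuinely different route from the paper. Where the paper invokes Kamnitzer--Tingley's Lemma~3.5 ($\Uqg/I(\mu)\cong\End_{\mathbb{C}(q)}(V(\mu))$) together with the explicit elements $p_\mu$ of \Cref{defn p lambda} to realize a prescribed endomorphism of $V(\mu)$ by an element killing a controlled family of other $V(\nu)$, you get surjectivity from the Jacobson density theorem applied to finite direct sums $\bigoplus_{\mu\in F}V(\mu)$, followed by a telescoping series $x_k=y_k-y_{k-1}$. The telescoping step is sound: each $x_k$ kills $V(\mu_1),\dots,V(\mu_{k-1})$, so the local-finiteness condition defining $S$ holds and the series acts as $(f_\mu)_\mu$. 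This is cleaner in that it avoids the explicit $p_\mu$ construction entirely. One small caveat: since $\mathbb{C}(q)$ is not algebraically closed, Schur's lemma alone gives only that $\End_{\Uqg}(V(\mu))$ is a division algebra; you should justify $\End_{\Uqg}(V(\mu))=\mathbb{C}(q)$ by the standard highest-weight argument (an endomorphism scales the one-dimensional highest weight space by some $\lambda\in\mathbb{C}(q)$, and $\phi-\lambda$ then has nonzero kernel, hence vanishes).

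The genuine gap is in the last step, where you assert that ``the density argument applies verbatim'' to the inverse-limit completion of \Cref{completion of Uq}. An element of $\varprojlim\, \Uqg/\Uqg U_q^{+,m}\cong\prod_{\beta\in\mathcal{Q}^+}U_q^{\leq 0}(U_q^+)_\beta$ is a formal sum with only finitely many terms in each height-$\beta$ component, and the elements $y_k$ (hence $x_k$) produced by density come with no control whatsoever on their decomposition along $\bigoplus_\beta U_q^{\leq 0}(U_q^+)_\beta$. For instance, one can have infinitely many $x_k$ with nonzero component in $U_q^{\leq 0}=U_q^{\leq 0}(U_q^+)_0$ (e.g., polynomials in the $K_i$ chosen to vanish on the weights of $V(\mu_1),\dots,V(\mu_{k-1})$), and such a series lies in $S$ but does not define an element of the direct product. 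So your telescoping series gives an element of $S/I$ but not, on its face, of the inverse limit; showing the two completions agree is exactly where the paper needs the explicit elements $p_\mu$, whose construction from $E_i^{\mu_i}F_i^{\mu_i}$ and the highest/lowest-weight shuttling gives the required control over which $V(\nu)$ they kill and over their triangular components. To close the gap you would either import \Cref{defn p lambda} after all, or give a separate argument that every element of $S/I$ admits a representative lying in $\prod_\beta U_q^{\leq 0}(U_q^+)_\beta$.
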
	
This result combines Theorem $3.3$ and Corollary $3.6$ in \cite{kamnitzer_tingley_2008}. Its proof relies on the following technical lemma.

\begin{lem}\cite[Lemma~3.4]{kamnitzer_tingley_2008}\label[lem]{defn p lambda}
	For each $\mu \in \mathcal{P}^+$, fix a highest weight vector $v_\mu \in V(\mu)$. There is an element $p_\mu \in \Uqg$ such that 
	\begin{enumerate}[(i)]
		\item $p_\mu(v_\mu) = v_\mu$.
		\item For any $\nu \neq \mu$, $p_\mu$ kills the weight space $V_\nu \subseteq V(\mu)$.
		\item $p_\mu V(\nu) = 0$ unless $\langle \nu - \mu, \alpha_i^\vee \rangle \geq 0$ for some $\alpha_i^\vee$.
	\end{enumerate}
\end{lem}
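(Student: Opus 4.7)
The plan is to construct $p_\mu$ inside the commutative subalgebra $U_q^0 \subset \Uqg$ generated by the $K_i^{\pm 1}$, via multivariate Lagrange interpolation. The key observation is that any Laurent polynomial $P(K_1,\ldots,K_r)$ acts on a weight vector of weight $\xi$ by the scalar $P(q^{\langle \xi, \alpha_1 \rangle}, \ldots, q^{\langle \xi, \alpha_r \rangle})$, so the problem reduces to specifying this scalar at finitely many points: $1$ at $\mu$ and $0$ at all other weights in a carefully chosen finite set $F \subset \mathcal{P}$.

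To assemble $F$, first I would isolate the set $\Lambda$ of $\nu \in \mathcal{P}^+$ satisfying $\langle \nu, \alpha_i^\vee \rangle < \langle \mu, \alpha_i^\vee \rangle$ for every $i$; each coordinate $\langle \nu, \alpha_i^\vee \rangle$ is then a nonnegative integer bounded above by $\langle \mu, \alpha_i^\vee \rangle - 1$, so $\Lambda$ is finite. Let $F$ be the union of the weight sets of $V(\mu)$ and of $V(\nu)$ for every $\nu \in \Lambda$; this is a finite subset of $\mathcal{P}$ since each simple module is finite-dimensional. Next I would verify that $\mu$ appears in $F$ only as the highest weight of $V(\mu)$. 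If $\mu$ were a weight of some $V(\nu)$ with $\nu \in \Lambda$, then $\mu - \nu = -\beta$ for some $\beta \in \mathcal{Q}^+$; yet expanding $\mu - \nu = \sum_i c_i \omega_i$ with $c_i = \langle \mu - \nu, \alpha_i^\vee \rangle > 0$ and expressing each $\omega_i$ as a strictly positive rational combination of the simple roots (possible because the inverse Cartan matrix has positive entries in finite type), we conclude $\mu - \nu \in \mathcal{Q}^+ \setminus \{0\}$, contradicting $\mu - \nu \in -\mathcal{Q}^+$.

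With $F$ in hand, for each $\xi \in F \setminus \{\mu\}$ choose an index $i(\xi)$ so that $\langle \xi, \alpha_{i(\xi)} \rangle \neq \langle \mu, \alpha_{i(\xi)} \rangle$, which is possible because two distinct elements of $\mathcal{P}$ must differ in at least one fundamental weight coordinate. Define
\begin{equation*}
p_\mu = \prod_{\xi \in F \setminus \{\mu\}} \frac{K_{i(\xi)} - q^{\langle \xi, \alpha_{i(\xi)} \rangle}}{q^{\langle \mu, \alpha_{i(\xi)} \rangle} - q^{\langle \xi, \alpha_{i(\xi)} \rangle}}.
\end{equation*}
Transcendence of $q$ makes the denominators nonzero. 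On a weight vector of weight $\eta$ this product evaluates to $1$ at $\eta = \mu$ (every numerator coincides with its denominator), giving (i); at any $\eta \in F \setminus \{\mu\}$ the factor indexed by $\eta$ vanishes. Property (ii) then follows because the weights of $V(\mu)$ all lie in $F$, and property (iii) follows because the weights of $V(\nu)$ lie in $F$ whenever $\nu \in \Lambda$, together with the claim that $\mu$ is not among those weights.

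The main obstacle is the middle paragraph: confirming that $\mu$ is not a weight of any $V(\nu)$ with $\nu \in \Lambda$. Once this positivity argument is in place, the rest is essentially mechanical Lagrange interpolation on a finite set of pairwise distinct tuples of powers of $q$, with distinctness itself coming from the transcendence of $q$.
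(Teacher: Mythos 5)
Your proof is correct, but it takes a genuinely different route from the paper's. The paper builds $p_\mu$ out of raising and lowering operators: it first forms $p' = EF$, where $F \in U_q^-$ carries $v_\mu$ to a lowest weight vector and $E \in U_q^+$ carries that vector back to $v_\mu$, which secures (i) and (ii); it then multiplies by $\prod_i (E_i)^{\mu_i}(F_i)^{\mu_i}$ so that string-length considerations for the $i$th $\mathfrak{sl}_2$-triple force the product to annihilate $V(\nu)$ for the finitely many $\nu$ violating the condition in (iii). You instead work entirely inside the commutative subalgebra $U_q^0$, exploiting the fact that for transcendental $q$ the eigenvalues $q^{\langle \xi, \alpha_i\rangle}$ of the $K_i$ separate the finitely many weights that matter, and then perform Lagrange interpolation. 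The two nontrivial inputs your argument needs are both correctly supplied: the finiteness of $\Lambda$ (coordinatewise bounds on dominant weights) and the fact that $\mu$ is not a weight of any $V(\nu)$ with $\nu \in \Lambda$, which you get from positivity of the entries of $A^{-1}$; for $\lieg$ semisimple but not simple the inverse Cartan matrix is only blockwise positive, but since every $c_i$ is strictly positive the sum $\sum_i c_i \omega_i$ still lies in the strict interior of the positive root cone, so the contradiction with $\mu - \nu \in -\mathcal{Q}^+$ survives. What your approach buys is an explicitly diagonal $p_\mu$ with a stronger property than asked (it acts as the identity on the $\mu$-weight space of \emph{every} module in sight and kills the $\xi$-weight space for every $\xi \in F \setminus \{\mu\}$), which is more than sufficient for the downstream use in \Cref{equivalence of completions}; what it costs is an essential reliance on $q$ being generic, whereas the paper's $E$/$F$ construction is the one that survives in the original reference's more delicate settings.
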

\begin{rmk}
	Our statement of $(iii)$ differs slightly from the one in \cite{kamnitzer_tingley_2008}, so we provide a proof.
\end{rmk}
\begin{proof}
	Fix a lowest weight vector $v_\mu^{\mathrm{low}} \in V(\mu)$. Recall that $V(\mu) = U_q^- \cdot v_\mu$, so there exists $F \in U_q^-$ such that $v_\mu^{\mathrm{low}} = F v_\mu$. Similarly, there exists $E \in U_q^+$ such that $E v_\mu^{\mathrm{low}} = v_\mu$. It follows that $p' = EF$ satisfies $(i)$ and $(ii)$, since for every non-zero weight space $V_\nu \subseteq V(\mu)$ we must have $\nu < \mu$, so that $F V_\nu = 0$.
	
	Now write $\mu = \sum_j \mu_j \omega_j$, set $p_i = (E_i)^{\mu_i} (F_i)^{\mu_i}$, and define 
\[
	p_\mu' = \bigg(\prod_{i=1}^r p_i \bigg) p',
\]
	where the product is taken in any order. Now if $\langle \nu - \mu, \alpha_i^\vee \rangle < 0$ for each $\alpha_i^\vee$, then $p_i$ acts by zero on $V(\nu)$. Conversely, if $\langle \nu - \mu, \alpha_i^\vee \rangle \geq 0$ for some $\alpha_i^\vee$, there is a weight vector in $V(\nu)$ that is not killed by $p_\mu'$ because the simple reflection $s_i$ corresponding to the simple root $\alpha_i$ acts as
\[
	s_i(\nu) = \nu - \langle \nu, \alpha_i^\vee \rangle \alpha_i.
\]
	We conclude that some scalar multiple of $p_\mu'$ satisfies $(i)$-$(iii)$ because each $p_i$ fixes $V_\mu \subseteq V(\nu)$. 
\end{proof}

\begin{proof}[Proof of \Cref{equivalence of completions}]
	Lemma~3.5 in \cite{kamnitzer_tingley_2008} shows that if $I(\mu)$ denotes the kernel of the action of $\Uqg$ on $V(\mu)$, then $\Uqg / I(\mu)$ is isomorphic to $\End_{\mathbb{C}(q)} V(\mu)$. Combined with \Cref{defn p lambda}, this means we can realize any endomorphism of $V(\mu)$ using an element of $\Uqg$ that kills every other $V(\nu)$ unless $\langle \nu - \mu, \alpha_i^\vee \rangle \geq 0$ for some $i = 1, \ldots, r$. Thus the completion defined by \labelcref{kt completion of Uq} is equivalent to the direct product $M = \displaystyle \prod_\mu \End_{\mathbb{C}(q)} (V(\mu))$. 
	
	The equivalence between $M$ and the completion defined by \Cref{completion of Uq} follows similarly. Each element of the direct limit has a well-defined action on every $V(\mu)$, so there is a map taking elements of the direct limit into $M$. Injectivity follows from the definition of the direct limit and surjectivity follows from the technical \Cref{defn p lambda}.
\end{proof}

\Cref{equivalence of completions} implies $\widetilde{\Uqg}$ is equivalent to the completion of $\Uqg$ in the weak topology generated by the matrix coefficients of all finite-dimensional $\Uqg$-modules \cite{kamnitzer_tingley_2008}. In particular, $\Uqg$ is a locally convex t.v.s. with topology generated by the family of seminorms
\[
	p^\mu_{i, j}(x) = | v^i(x\rhd v_j) |, \quad \text{where} \quad i, j = 1, \ldots, \dim(V(\mu)), \text{ and } \mu \in \mathcal{P}^+.
\]
Here $v_i$ is a weight basis of $V(\mu)$ and $v^i$ is the corresponding dual basis of $V(\mu)^*$. This topology is Hausdorff: Theorem $7.13$ in \cite{klimyk_schmudgen_1997} guarantees no non-zero element of $\Uqg$ acts by zero on every $V(\mu)$, which means
\[
	\bigcap_{\mu, i, j} \ker (p_{i,j}^\mu) = 0.
\]
The topology is generated by countably many seminorms, so sequences characterize completeness. Each equivalence class of Cauchy sequences on $\Uqg$ induces an endomorphism on each $V(\mu)$ considered with its weak topology and \Cref{equivalence of completions} proves that each such family of endomorphisms corresponds uniquely to a formal sum $\sum_k x_k$ with $x_k \in \Uqg$ such that $x_k \rhd V(\mu) = 0$ for all but finitely many $k$. 

Thus $\widetilde{\Uqg}$ is a topological algebra in the sense of \Cref{topological alg}. The Hopf algebra structure of $\Uqg$ naturally extends to the completion $\widetilde{\Uqg}$. However, the comultiplication on $\widetilde{\Uqg}$ is only a \textit{topological} comultiplication, in the sense that $\Delta_q(\widetilde{\Uqg})$ is not contained in $\widetilde{\Uqg} \otimes \widetilde{\Uqg}$, but rather in the completed tensor product $\widetilde{\Uqg} \widetilde{\otimes} \widetilde{\Uqg}$. This is most easily seen using \Cref{equivalence of completions}: each element of $\widetilde{\Uqg}$ is equivalent to a choice of $f \in \End(V(\mu))$ for each $\mu \in \mathcal{P}^+$, so $\Delta_q$ maps $\widetilde{\Uqg}$ into $\displaystyle \prod_{\mu, \nu} \End_{\mathbb{C}(q)}(V(\mu)) \otimes \End_{\mathbb{C}(q)}(V(\nu))$, which is understood as the completion of $\displaystyle \prod_\mu \End_{\mathbb{C}(q)}(V(\mu)) \otimes \prod_\nu \End_{\mathbb{C}(q)}(V(\nu))$ \cite{kamnitzer_tingley_2008}. 

It follows that specifying an element of $\widetilde{\Uqg} \widetilde{\otimes} \widetilde{\Uqg}$ is equivalent to specifying an action on every tensor product of any pair of irreducible $\Uqg$-modules. This means $\mathcal{E}_h$ and $\mathcal{S}_h$ define elements $\widetilde{\mathcal{E}_q}$ and $\widetilde{\mathcal{S}_q}$ in $\widetilde{\Uqg} \widetilde{\otimes} \widetilde{\Uqg}$. Therefore $\widetilde{\Rmat_q} = \widetilde{\mathcal{E}_q}\widetilde{\mathcal{S}_q}$ is a well-defined element of $\widetilde{\Uqg} \widetilde{\otimes} \widetilde{\Uqg}$. This element must be an $R$-matrix of $\widetilde{\Uqg}$ because it acts as the braiding on $\Uqgmodcat$ on any tensor product of irreducibles. Hence $\widetilde{\Uqg}$ is a braided topological bialgebra.

Moreover, there is a ribbon element in $\widetilde{\Uqg}$, in the sense of \Cref{ribbon algebra}. Concretely, the ribbon element $\theta_h$ defined in \Cref{R matrix of Uh special relation} induces an action on each $\Uqg$-module via \Cref{ribbon elt action} once we identify $q = e^h$. In view of \Cref{equivalence of completions}, this means $\theta_h$ specifies an element $\theta_q$ in $\widetilde{\Uqg}$. 

\begin{thm}\label{Uq is a ribbon alg}
	For any $V(\mu) \in \Uqgmodcat$, let $\theta_{V(\mu)}: V(\mu) \to V(\mu)$ denote the isomorphism acting as the scalar $q^{\langle \mu, \mu + 2\rho\rangle}$. Then $\theta_{V(\mu)}$ is a family of twists in the sense of \Cref{ribbon algebra}, making $\Uqgmodcat$ into a ribbon category. The twists are induced by the action of a single element $\inv{\theta_q} \in \widetilde{\Uqg}$, which means $\widetilde{\Uqg}$ is a topological ribbon algebra.
\end{thm}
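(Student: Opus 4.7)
The strategy is to transport the ribbon structure from the topological algebra $\Uh$ to $\widetilde{\Uqg}$ along the identification $q = e^h$, using that both algebras are characterized by their action on the same family of irreducible modules indexed by $\mathcal{P}^+$. First, I would define $\theta_q \in \widetilde{\Uqg}$ to be the element acting as the scalar $q^{-\langle \mu, \mu + 2\rho\rangle}$ on each irreducible $V(\mu)$; this is well-defined by \Cref{equivalence of completions}, since specifying an element of $\widetilde{\Uqg}$ amounts to choosing an endomorphism of $V(\mu)$ for every $\mu \in \mathcal{P}^+$. Note $\theta_q$ is central in $\widetilde{\Uqg}$ because it acts by a scalar on each simple module, and hence it commutes with the action of every element of $\Uqg$ on every object of the semisimple category $\Uqgmodcat$.

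Next I would verify the three defining properties of a ribbon element from \Cref{ribbon algebra}. The counit relation $\varepsilon(\theta_q) = 1$ follows by evaluating $\theta_q$ on the trivial module $V(0)$, on which it acts as $q^0 = 1$. The antipode relation $S(\theta_q) = \theta_q$ is obtained exactly as at the end of the proof of \Cref{R matrix of Uh special relation}: $S(\theta_q)$ acts on $V(\mu)$ as $\theta_q$ acts on the contragredient module $V(\mu)^*$ (whose highest weight is $-w_0\mu$), and the Weyl group invariance of $\langle\,,\,\rangle$ together with $-w_0 \rho = \rho$ yields
\begin{equation*}
    \langle -w_0\mu, -w_0\mu + 2\rho\rangle
    = \langle w_0\mu, w_0\mu\rangle + \langle w_0\mu, 2\rho\rangle
    = \langle \mu, \mu + 2\rho\rangle.
\end{equation*}
The coproduct relation $\Delta_q(\theta_q) = \inv{(\widetilde{\Rmat_q}_{21}\widetilde{\Rmat_q})}(\theta_q \otimes \theta_q)$ is the main content; both sides are elements of $\widetilde{\Uqg}\widetilde{\otimes}\widetilde{\Uqg}$, so by the discussion preceding the theorem, it suffices to check equality of their actions on every tensor product $V(\mu)\otimes V(\nu)$ of irreducibles. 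Under the correspondence $q = e^h$ provided by \Cref{deformation cat}, each $V(\mu)$ is identified with the reduction modulo $h$ of the indecomposable $\Uh$-module $\widetilde{V}(\mu)$, the action of $\widetilde{\Rmat_q}$ coincides by construction with that of $\Rmat_h$, and $\theta_q$ acts with the same eigenvalues as $\theta_h$ by \Cref{ribbon elt action}. Consequently the identity transferred from \Cref{R matrix of Uh special relation} inside $\Uh$ yields the desired identity in $\widetilde{\Uqg}\widetilde{\otimes}\widetilde{\Uqg}$.

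Having shown $\theta_q$ is a ribbon element, $\widetilde{\Uqg}$ is a topological ribbon algebra, so by \Cref{ribbon cat if ribbon alg} the category of finite-rank modules carries a ribbon structure whose twist $\theta_V$ acts as $\inv{\theta_q}$. Restricting to $\Uqgmodcat$ and evaluating on an irreducible $V(\mu)$ gives $\theta_{V(\mu)} = q^{\langle \mu, \mu+2\rho\rangle}\,\id_{V(\mu)}$, which is exactly the family of isomorphisms in the statement. I expect the main obstacle to be the rigorous justification of the coproduct identity: strictly speaking one must argue that $\widetilde{\Rmat_q}_{21}\widetilde{\Rmat_q}$ is invertible in the completed tensor product and that the congruences $\Delta_h \equiv \Delta_0 \pmod h$ and $\varepsilon_h \equiv \varepsilon_0 \pmod h$ translate correctly to the $q$-setting when matching actions on each $V(\mu)\otimes V(\nu)$. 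Everything else, including centrality and the antipode condition, is essentially a direct computation with scalars.
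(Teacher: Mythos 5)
Your proposal is correct and follows essentially the same strategy as the paper: both define the ribbon element via its scalar action on each $V(\mu)$, invoke \Cref{equivalence of completions} to make this well-defined, and transport the ribbon identities (coproduct, counit, antipode) from $\Uh$ under $q = e^h$ by comparing actions on tensor products of irreducibles. The one substantive difference is that the paper does not stop at the spectral definition: it exhibits the ribbon element concretely inside $\widetilde{\Uqg}$ as $K_{2\rho}\inv{u}$, where $K_{2\rho}$ is a product of the $K_i$ acting as $q^{\langle \mu, 2\rho\rangle}$ on weight spaces and $u$ is the Drinfeld element built from $\Rmat_q$ itself; the eigenvalue computation then identifies this expression with your abstractly defined $\inv{\theta_q}$. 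Your route is leaner and entirely adequate for the statement as given, while the paper's explicit formula is internal to the quantum group (no reference to $\Uh$ needed once $\Rmat_q$ is in hand) and is the form one actually uses in computations. One small slip: in your antipode verification the intermediate expression should read $\langle -w_0\mu, 2\rho\rangle$ rather than $\langle w_0\mu, 2\rho\rangle$ (as written, that term would acquire the wrong sign since $w_0(2\rho) = -2\rho$); the final identity $\langle -w_0\mu, -w_0\mu + 2\rho\rangle = \langle \mu, \mu + 2\rho\rangle$ is nonetheless correct and matches the paper's argument.
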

\begin{proof}
	\Cref{equivalence of completions} implies the family of maps $(\theta_{V(\mu)})_{\mu \in \mathcal{P}}$ specifies a single element $\inv{\theta}_q \in \widetilde{\Uqg}$. Recalling that $\rho = \sum_i \mu_i \alpha_i^\vee$, set $K_{2\rho} = \prod_i K_i^{2\inv{d_i} \mu_i}$, with the product taken in any order. As the sum of the positive roots, $2\rho \in \mathcal{Q}$, and therefore each $2 \inv{d_i} \mu_i \in \mathbb{Z}$. The element $K_{2\rho}$ acts as the scalar $q^{\langle \mu, 2\rho \rangle}$ on the weight space $V_\mu$ of any finite-dimensional module. Arguing as in the proof of \Cref{R matrix of Uh special relation}, we construct a distinguished element $u \in \widetilde{\Uqg}$, this time using the $R$-matrix $\Rmat_q$. The eigenvalue calculation in that proof implies that the element $K_{2\rho} \inv{u} \in \widetilde{\Uqg}$ also acts by the scalar $q^{\langle \mu, \mu + 2\rho\rangle}$ on $V(\mu)$. The isomorphism of \Cref{equivalence of completions} guarantees that $\inv{\theta} = K_{2\rho} \inv{u}$. \Cref{R matrix of Uh special relation} then shows that $(\Rmat_q)_{21}\Rmat_q$ and $\Delta(\theta)(\inv{\theta} \widetilde{\otimes} \inv{\theta})$ have the same action on the tensor product of any pair of finite-dimensional $\Uqg$-modules, so they define the same element of $\widetilde{\Uqg} \widetilde{\otimes} \widetilde{\Uqg}$ by \Cref{equivalence of completions}. The remaining relations $S_q(\theta) = \theta$ and $\varepsilon_q(\theta) = 1$ follow from the argument used to establish \Cref{R matrix of Uh special relation}; see for more details \cite[p.~323]{chari_pressley_1994}.
\end{proof}
\begin{rmk}
	The element $\theta_q$ can also be constructed in the PBW-type basis as a sum of canonical elements analogous to the Casimir element of $\Ug$, by defining a non-degenerate bilinear pairing between $U_q^+$ and $U_q^-$. For details, see \cite[Section~6.3.3]{klimyk_schmudgen_1997}.
\end{rmk}	 	


\bibliographystyle{alphaurl}
\bibliography{ref}

\begin{thebibliography}{EGNO15}

\bibitem[Abo22]{willie_aqc}
Willie Aboumrad.
\newblock Quantum computing with anyons: an {$F$}-matrix and braid calculator,
  2022.
\newblock URL: \url{https://arxiv.org/abs/2212.00831}, \href
  {https://doi.org/10.48550/ARXIV.2212.00831}
  {\path{doi:10.48550/ARXIV.2212.00831}}.

\bibitem[AP95]{ap_95}
Henning~Haar Andersen and Jan Paradowski.
\newblock Fusion categories arising from semisimple {L}ie algebras.
\newblock {\em Communications in Mathematical Physics}, 169(3):563--588, 1995.
\newblock \href {https://doi.org/10.1007/bf02099312}
  {\path{doi:10.1007/bf02099312}}.

\bibitem[BFGP94]{bonneau_flato_gerstenhaber_pinczon_1994}
P.~Bonneau, M.~Flato, M.~Gerstenhaber, and G.~Pinczon.
\newblock The hidden group structure of quantum groups: {S}trong duality,
  rigidity and preferred deformations.
\newblock {\em Communications in Mathematical Physics}, 161(1):125--156, 1994.
\newblock \href {https://doi.org/10.1007/bf02099415}
  {\path{doi:10.1007/bf02099415}}.

\bibitem[Bru00]{bruguieres}
Alain Brugui\`eres.
\newblock Cat\'egories pr\'emodulaires, modularisations et invariants des
  vari\'et\'es de dimension 3.
\newblock {\em Mathematische Annalen}, 316(2):215--236, 2000.
\newblock \href {https://doi.org/10.1007/s002080050011}
  {\path{doi:10.1007/s002080050011}}.

\bibitem[BS03]{Bonneau}
Philippe Bonneau and Daniel Sternheimer.
\newblock Topological hopf algebras, quantum groups and deformation
  quantization, 2003.
\newblock URL: \url{https://arxiv.org/abs/math/0307277}, \href
  {https://doi.org/10.48550/ARXIV.MATH/0307277}
  {\path{doi:10.48550/ARXIV.MATH/0307277}}.

\bibitem[Bum04]{Bump}
Daniel Bump.
\newblock {\em Lie Groups}.
\newblock Springer, second edition, 2004.
\newblock \href {https://doi.org/10.1007/978-1-4757-4094-3}
  {\path{doi:10.1007/978-1-4757-4094-3}}.

\bibitem[CE04]{calaque_etingof_2004}
Damien Calaque and Pavel Etingof.
\newblock {\em Lectures on tensor categories}.
\newblock Universite Louis Pasteur, 2004.
\newblock \href {https://doi.org/10.4171/047-1/1} {\path{doi:10.4171/047-1/1}}.

\bibitem[CP94]{chari_pressley_1994}
Vyjayanthi Chari and Andrew Pressley.
\newblock {\em A guide to quantum groups}.
\newblock Cambridge University Press, 1994.
\newblock \href {https://doi.org/10.1007/BF00750760}
  {\path{doi:10.1007/BF00750760}}.

\bibitem[DFMS11]{francesco_mathieu_senechal_2011}
Philippe Di~Francesco, Pierre Mathieu, and David Senechal.
\newblock {\em Conformal {F}ield {T}heory}.
\newblock Springer, 2011.

\bibitem[Dri87]{Dri_qgps}
Vladimir~G Drinfeld.
\newblock Quantum groups.
\newblock In {\em Proceedings from International Congress of Mathematics
  (Berkeley, 1986)}, pages 798--820. American Mathematical Society, 1987.
\newblock \href {https://doi.org/10.1007/BF01247086}
  {\path{doi:10.1007/BF01247086}}.

\bibitem[Dri89a]{Dri_QHA}
V.~G. Drinfeld.
\newblock On almost cocommutative {H}opf algebras.
\newblock {\em Algebra i Analiz}, pages 30--46, 1989.

\bibitem[Dri89b]{Dri_QHA_KZ_eqns}
V.~G. Drinfeld.
\newblock Quasi-hopf algebras and {K}nizhnik-{Z}amolodchikov equations.
\newblock {\em Research Reports in Physics Problems of Modern Quantum Field
  Theory}, pages 1--13, 1989.
\newblock \href {https://doi.org/10.1007/978-3-642-84000-5_1}
  {\path{doi:10.1007/978-3-642-84000-5_1}}.

\bibitem[Dri89c]{dri_89b}
V.G. Drinfeld.
\newblock Quasi-{H}opf algebras.
\newblock {\em Algebra i {A}naliz}, 6:114--148, 1989.
\newblock [English transl.: {L}eningrad {M}ath J. 1 (1990), 1419-1457.].

\bibitem[EGNO15]{etingof_gelaki_nikshych_ostrik_2017}
P.~I. Etingof, Shlomo Gelaki, Dmitri Nikshych, and Victor Ostrik.
\newblock {\em Tensor categories}.
\newblock American Mathematical Society, 2015.

\bibitem[Fin96]{finkelberg_1996}
M.~Finkelberg.
\newblock An equivalence of fusion categories.
\newblock {\em Geometric and Functional Analysis}, 6(2):249--267, 1996.
\newblock \href {https://doi.org/10.1007/bf02247887}
  {\path{doi:10.1007/bf02247887}}.

\bibitem[Fre66]{freyd_1966}
Peter Freyd.
\newblock {\em Abelian categories, etc}.
\newblock New York; John Weatherhill: Tokyo; printed in Japan, 1966.

\bibitem[GK92]{gelfand_kazhdan_1992}
Sergei Gelfand and David Kazhdan.
\newblock Examples of tensor categories.
\newblock {\em Inventiones Mathematicae}, 109(1):595--617, 1992.
\newblock \href {https://doi.org/10.1007/bf01232042}
  {\path{doi:10.1007/bf01232042}}.

\bibitem[Gro55]{grothendieck_1955}
Alexandre Grothendieck.
\newblock Produits tensoriels topologiques et espaces nucl\'eaires.
\newblock {\em Memoirs of the American Mathematical Society}, 0(16), 1955.
\newblock \href {https://doi.org/10.1090/memo/0016}
  {\path{doi:10.1090/memo/0016}}.

\bibitem[Jan96]{Jantzen96}
Jens~Carsten Jantzen.
\newblock {\em Lectures on quantum groups}, volume~6 of {\em Graduate Studies
  in Mathematics}.
\newblock American Mathematical Society, Providence, RI, 1996.
\newblock \href {https://doi.org/10.1090/gsm/006} {\path{doi:10.1090/gsm/006}}.

\bibitem[Jon06]{jones_2006}
V.F.R. Jones.
\newblock The jones polynomial.
\newblock {\em Encyclopedia of Mathematical Physics}, pages 179--187, 2006.
\newblock \href {https://doi.org/10.1016/b0-12-512666-2/00257-1}
  {\path{doi:10.1016/b0-12-512666-2/00257-1}}.

\bibitem[Kac83]{kac_1985}
V.G Kac.
\newblock {\em Infinite {D}imensional {L}ie {A}lgebras}.
\newblock Cambridge University Press, 1983.
\newblock \href {https://doi.org/10.1007/978-1-4757-1382-4}
  {\path{doi:10.1007/978-1-4757-1382-4}}.

\bibitem[Kas95]{Kassel}
Christian Kassel.
\newblock {\em Quantum Groups}.
\newblock Springer-Science, first edition, 1995.
\newblock \href {https://doi.org/0.1007/978-1-4612-0783-2}
  {\path{doi:0.1007/978-1-4612-0783-2}}.

\bibitem[KL93]{kazhdan_lusztig_1993}
D.~Kazhdan and G.~Lusztig.
\newblock Tensor structures arising from affine lie algebras. i.
\newblock {\em Journal of the American Mathematical Society}, 6(4):905--947,
  1993.
\newblock \href {https://doi.org/10.1090/s0894-0347-1993-99999-x}
  {\path{doi:10.1090/s0894-0347-1993-99999-x}}.

\bibitem[Koh02]{Kohno}
Toshitake Kohno.
\newblock {\em Conformal Field Theory and Topology}.
\newblock American Mathematical Society, 2002.

\bibitem[KR90]{kr90}
A.~N. Kirillov and N.~Reshetikhin.
\newblock $q$-{W}eyl group and a multiplicative formula for universal
  $r$-matrices.
\newblock {\em Communications in Mathematical Physics}, 134(2):421--431, 1990.
\newblock \href {https://doi.org/10.1007/bf02097710}
  {\path{doi:10.1007/bf02097710}}.

\bibitem[KRT97]{kassel_rosso_turaev_1997}
Christian Kassel, Marc Rosso, and Vladimir~Georgievic Turaev.
\newblock {\em Quantum groups and knot invariants}.
\newblock Soci\'et\'e math\'ematique de France, 1997.

\bibitem[KS97]{klimyk_schmudgen_1997}
Anatoli Klimyk and Konrad Schm\"udgen.
\newblock {\em Quantum groups and their representations}.
\newblock Springer-Verlag, 1997.
\newblock \href {https://doi.org/0.1007/978-3-642-60896-4}
  {\path{doi:0.1007/978-3-642-60896-4}}.

\bibitem[KT08]{kamnitzer_tingley_2008}
Joel Kamnitzer and Peter Tingley.
\newblock The crystal commutor and {D}rinfeld's unitarized {R}-matrix.
\newblock {\em Journal of Algebraic Combinatorics}, 29(3):315--335, 2008.
\newblock \href {https://doi.org/10.1007/s10801-008-0137-0}
  {\path{doi:10.1007/s10801-008-0137-0}}.

\bibitem[KZ84]{kz}
V.G. Knizhnik and A.B. Zamolodchikov.
\newblock Current algebra and {W}ess-{Z}umino model in two dimensions.
\newblock {\em Nuclear Physics B}, 247(1):83--103, 1984.
\newblock \href {https://doi.org/10.1016/0550-3213(84)90374-2}
  {\path{doi:10.1016/0550-3213(84)90374-2}}.

\bibitem[Lan78]{maclane_1988}
Saunders~Mac Lane.
\newblock {\em Categories for the working mathematician}.
\newblock Springer, 1978.
\newblock \href {https://doi.org/10.1007/978-1-4757-4721-8}
  {\path{doi:10.1007/978-1-4757-4721-8}}.

\bibitem[LR97]{leduc_ram_1997}
Robert Leduc and Arun Ram.
\newblock A ribbon {H}opf algebra approach to the irreducible representations
  of centralizer algebras: The {B}rauer, {B}irman--{W}enzl, and type {A}
  {I}wahori--{H}ecke algebras.
\newblock {\em Advances in Mathematics}, 125(1):1--94, 1997.
\newblock \href {https://doi.org/10.1006/aima.1997.1602}
  {\path{doi:10.1006/aima.1997.1602}}.

\bibitem[Lus88]{lusztig_1988}
G~Lusztig.
\newblock Quantum deformations of certain simple modules over enveloping
  algebras.
\newblock {\em Advances in Mathematics}, 70(2):237--249, 1988.
\newblock \href {https://doi.org/10.1016/0001-8708(88)90056-4}
  {\path{doi:10.1016/0001-8708(88)90056-4}}.

\bibitem[LZZ10]{lzz_2010}
G.~I. Lehrer, Hechun Zhang, and R.~B. Zhang.
\newblock A quantum analogue of the {F}irst {F}undamental {T}heorem of
  classical invariant theory.
\newblock {\em Communications in Mathematical Physics}, 301(1):131--174, 2010.
\newblock \href {https://doi.org/10.1007/s00220-010-1143-3}
  {\path{doi:10.1007/s00220-010-1143-3}}.

\bibitem[MS89]{moore_seiberg_1989}
Gregory Moore and Nathan Seiberg.
\newblock Classical and quantum conformal field theory.
\newblock {\em Communications in Mathematical Physics}, 123(2):177--254, 1989.
\newblock \href {https://doi.org/10.1007/bf01238857}
  {\path{doi:10.1007/bf01238857}}.

\bibitem[Ros91]{rosso_qgps}
Marc Rosso.
\newblock Repr{\'e}sentations des groupes quantiques.
\newblock {\em Ast{\'e}risque}, 33:443--482, 1991.

\bibitem[RT90]{reshetikhin_turaev_1990}
N.~Y. Reshetikhin and V.~G. Turaev.
\newblock Ribbon graphs and their invaraints derived from quantum groups.
\newblock {\em Communications in Mathematical Physics}, 127(1):1--26, Jan 1990.
\newblock \href {https://doi.org/10.1007/bf02096491}
  {\path{doi:10.1007/bf02096491}}.

\bibitem[Saw96]{sawin_1996}
Stephen Sawin.
\newblock Links, quantum groups and {TQFT}s.
\newblock {\em Bulletin of the American Mathematical Society}, 33(04):413--446,
  Jan 1996.
\newblock \href {https://doi.org/10.1090/s0273-0979-96-00690-8}
  {\path{doi:10.1090/s0273-0979-96-00690-8}}.

\bibitem[Sta92]{stasheff}
Jim Stasheff.
\newblock Drinfel'd's quasi-hopf algebras and beyond.
\newblock {\em Deformation Theory and Quantum Groups with Applications to
  Mathematical Physics Contemporary Mathematics}, pages 297--307, 1992.
\newblock \href {https://doi.org/10.1090/conm/134/1187294}
  {\path{doi:10.1090/conm/134/1187294}}.

\bibitem[Tur16]{turaev_2016}
V.~G. Turaev.
\newblock {\em Quantum invariants of knots and 3-manifolds}.
\newblock Walter de Gruyter GmbH, 2016.

\bibitem[Wan10]{wang_2010}
Zhenghan Wang.
\newblock {\em Topological Quantum Computation}.
\newblock Published for the Conference Board of the Mathematical Sciences by
  the American Mathematical Society, 2010.

\bibitem[Yet92]{yetter_1992}
David~N. Yetter.
\newblock Framed tangles and a theorem of {D}eligne on braided deformations of
  {T}annakian categories.
\newblock {\em Deformation Theory and Quantum Groups with Applications to
  Mathematical Physics Contemporary Mathematics}, pages 325--349, 1992.
\newblock \href {https://doi.org/10.1090/conm/134/1187296}
  {\path{doi:10.1090/conm/134/1187296}}.

\end{thebibliography}
	
\end{document}